\documentclass{article}

\usepackage{hyperref}
\usepackage{amsthm,amssymb,amsmath}

\author{Ryan Schwartz \and J\'ozsef Solymosi}

\newtheorem{theorem}{Theorem} 
\newtheorem{lemma}[theorem]{Lemma}     

\newtheorem{conjecture}[theorem]{Conjecture}

\title{Combinatorial Applications of the Subspace Theorem\thanks{%
Work by J\'ozsef Solymosi was supported by NSERC, ERC-AdG. 321104, and OTKA NK 104183 grants. }}

\begin{document}

\maketitle

\section{Introduction}
\label{sec:intro}

The Subspace Theorem is a powerful tool in number theory.  It has appeared in 
various forms and been adapted and improved over time.  It's applications
include diophantine approximation, results about integral points on algebraic curves and
the construction of transcendental numbers.  But its usefulness extends beyond
the realms of number theory.  Other applications of the Subspace Theorem include
linear recurrence sequences and finite automata.  In fact, these structures are
closely related to each other and the construction of transcendental numbers.

The Subspace Theorem also has a number of remarkable combinatorial applications.
The purpose of this paper is to give a survey of some of these applications 
including sum-product estimates and bounds on unit distances.  The presentation 
will be from the point of view of a discrete mathematician.  We
will state a number of variants of the Subspace Theorem below but we will
not prove any of them as the proofs are beyond the scope of this work.  
However we will give a proof of a simplified special case of the
Subspace Theorem which is still very useful for many problems in discrete
mathematics.

A number of surveys have been given of the Subspace Theorem highlighting its
multitude of applications.  Notable surveys include those of Bilu \cite{Bilu07},
Evertse and Schlickewei \cite{Ever99} and Corvaja and Zannier \cite{Corv08}.
These give many proofs of results from number theory and algebraic geometry
using the Subspace Theorem including those mentioned above.

Wolfgang M.~Schmidt was the first to state and prove a variant of the Subspace Theorem
in 1972 \cite{Schm72}. His theorem was extended and played a very important role
in modern number theory. Before we state the Subspace Theorem we need some definitions.  A
\emph{linear form} is an expression of the form
$L(x)=a_1x_1+a_2x_2+\dots+a_nx_n$ where $a_1, \dots, a_n$ are constants and
$x=(x_1, \dots, x_n)$.  A collection of linear forms is \emph{linearly
   independent} if none of them can be expressed as a linear combination of the
   others.  Given $x=(x_1, \dots, x_n)$ we define the maximum norm \[\|x\| =
      \max(|x_1|, \dots, |x_n|).\]
\begin{theorem}[Subspace Theorem I]
   \label{thm:schmidt}
   Suppose we have $n$ linearly independent linear forms $L_1, L_2, \dots, L_n$
   in $n$ variables with algebraic coefficients.  Given $\varepsilon>0$, the
   non-zero integer points $x=(x_1, x_2, \dots, x_n)$ satisfying \[|L_1(x)L_2(x)\dots
      L_n(x)| < \|x\|^{-\varepsilon}\] lie in finitely many proper linear
      subspaces of $\mathbb{Q}^n$.

\end{theorem}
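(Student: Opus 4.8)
The plan is to follow the method of Roth's theorem on rational approximation, of which Theorem~\ref{thm:schmidt} is the $n$-dimensional generalization: taking $L_1(x) = x_2$ and $L_2(x) = x_1 - \alpha x_2$ with $\alpha$ algebraic and substituting $x = (p,q)$ recovers exactly the statement that $|\alpha - p/q| < q^{-2-\varepsilon}$ has only finitely many solutions. The proof runs by contradiction together with an induction on $n$; the base case $n = 1$ is trivial, since $|a_1 x_1| < |x_1|^{-\varepsilon}$ has only finitely many integer solutions.

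First I would pass to a number field $K$ containing all the coefficients, rescale so that the coefficient matrix of $L_1, \dots, L_n$ has determinant of absolute value $1$, and bring in the geometry of numbers. For an integer point $x$ with $\|x\| = Q$ lying in the convex body $|L_i(x)| \le \lambda_i$, Minkowski's second theorem bounds the product of the successive minima of that body, which shows that if $\prod_i |L_i(x)|$ is extremely small then $x$ must lie very close to a proper subspace cut out by some sub-collection of the forms. Restricting the $L_i$ to such a subspace and applying the inductive hypothesis, one is left to rule out an \emph{infinite} family of ``essentially $n$-dimensional'' good approximations; a gap (pigeonhole) argument further extracts from any such family a subsequence $x^{(1)}, x^{(2)}, \dots$ whose heights $Q_j = \|x^{(j)}\|$ grow so fast that $Q_{j+1}$ exceeds a prescribed large power of $Q_j$.

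The core is then a clash between an upper and a lower bound for the \emph{index} of an auxiliary polynomial. By a Siegel-type pigeonhole count on the space of polynomials $P(X^{(1)}, \dots, X^{(m)})$ that are multihomogeneous of degrees $d_1, \dots, d_m$ in $m$ blocks of $n$ variables, I would produce a nonzero such $P$ with integer coefficients of controlled size vanishing to high index along the subspaces associated to the $L_i$. Evaluating $P$ and its derivatives of low weighted order at $(x^{(1)}, \dots, x^{(m)})$ and using the hypothesis that each $|L_i(x^{(j)})|$ is tiny forces all of them to vanish, so the index of $P$ there is \emph{large}. On the other hand a non-vanishing theorem --- the generalization of Roth's lemma that is the technical heart of Schmidt's work, or equivalently an application of Faltings' Product Theorem --- asserts that a polynomial of these degrees and this height \emph{cannot} vanish to so high an index at a point whose blocks are integral and grow as fast as the $Q_j$. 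For parameters chosen in the right order this is a contradiction, completing the induction.

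I expect the main obstacle to be precisely this non-vanishing estimate. Roth's lemma alone is already intricate, and its extension to the multiprojective setting used here requires either Schmidt's original argument via compound convex bodies and Grassmann (Plücker) coordinates or, in the modern treatment, Faltings' Product Theorem --- either route is well beyond what can be reproduced here, which is exactly why the paper only sketches it. A secondary, more mundane, difficulty is the order of quantifiers: $\varepsilon$, the number $m$ of points, the degrees $d_j$, and all the height thresholds must be fixed in a carefully staged sequence so that the ``large index'' and ``small index'' bounds genuinely conflict, and so that the final conclusion really is about finitely many proper subspaces rather than merely finitely many points.
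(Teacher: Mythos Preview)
The paper does \emph{not} prove this theorem. It says explicitly in Section~\ref{sec:intro} that ``we will state a number of variants of the Subspace Theorem below but we will not prove any of them as the proofs are beyond the scope of this work,'' and Theorem~\ref{thm:schmidt} is simply stated and attributed to Schmidt~\cite{Schm72}. So there is nothing in the paper to compare your proposal against; your closing remark that ``the paper only sketches it'' is mistaken --- the paper does not even sketch it.

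That said, your outline is a broadly accurate high-level summary of the standard proof: the reduction to a rapidly growing sequence of approximations via a gap argument, the Siegel-lemma construction of an auxiliary multihomogeneous polynomial, the clash between a large-index lower bound forced by the approximation hypothesis and a small-index upper bound coming from a Roth-type non-vanishing lemma (or, in later treatments, Faltings' Product Theorem). You have also correctly identified the genuine bottleneck, namely the non-vanishing step, which in Schmidt's original argument goes through successive minima of compound bodies and Grassmann coordinates and is substantially harder than Roth's lemma in the one-dimensional case. One small correction: the induction you describe is not quite how Schmidt structures the argument --- the subspaces in the conclusion are not produced by an inductive descent on $n$ but rather arise all at once from the geometry-of-numbers step, and the contradiction argument is run directly in dimension $n$. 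But as a plan this is the right shape; it is just that filling in the non-vanishing step is, as you acknowledge, a paper-length undertaking in its own right, which is precisely why the present survey omits the proof entirely.
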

It generalised the Thue-Siegel-Roth Theorem on the approximation of algebraic
numbers \cite{Roth55} to higher dimensions.

Theorem~\ref{thm:schmidt} has been extended in various directions by many
authors including Schmidt himself, Schlickewei, Evertse, Amoroso and Viada.
Analogues have been proved using $p$-adic norms and over arbitrary number fields
and bounds on the number of subspaces required have been found.  These bounds
depend on the degree of the number field and the dimension.  For some of these
results and more information see \cite{Ever99}, \cite{Ever02} and \cite{Amor09}.

Now we give a $p$-adic version of the Subspace Theorem that we will use in the next
section.  Given a prime $p$, the $p$-adic absolute value is denoted $|x|_p$ and 
satisfies
$|p|_p=1/p$.  $|x|_{\infty}$ denotes the usual absolute
value so $|x|_{\infty}=|x|$.  We may refer to $\infty$ as the \emph{infinite 
prime}.  We define the height of a rational vector $x=(x_1, \dots, x_n)$ by 
\[H(x) =
\prod_{p} \|x\|_p = \prod_{p}\max\{1, |x_1|_p, \dots, |x_n|_p\}.\]  Here the 
product extends over all primes including the infinite prime.  Note that for any 
$x$ only finitely many terms in the product are not $1$.
\begin{theorem}[Subspace Theorem II] \label{thm:ss2}
   Suppose $S = \{\infty, p_1, \dots, p_t\}$ is a finite set of primes, 
   including the
   infinite prime.  For every $p\in S$ let $L_{1,p},
   \dots, L_{n,p}$ be linearly independent linear forms in $n$ variables with
   algebraic coefficients.  Then for any $\varepsilon > 0$ the solutions $x\in
   \mathbb{Z}^n$ of \[\prod_{p\in S}\prod_{i=1}^n|L_{i,p}(x)|_v \le
   H(x)^{-\varepsilon}\] are contained in finitely many proper linear subspaces
   of $\mathbb{Q}^n$.
\end{theorem}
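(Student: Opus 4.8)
The plan is to follow the strategy pioneered by Schmidt in his proof of Theorem~\ref{thm:schmidt} and later adapted to the $p$-adic and $S$-arithmetic setting by Schlickewei and Evertse, which fuses the geometry of numbers with the polynomial method underlying the Thue--Siegel--Roth theorem. First I would argue by contradiction: suppose the solution set is \emph{not} contained in finitely many proper subspaces. Passing to a subsequence along which the projective directions of the solutions converge at every place of $S$ (a Mahler-type compactness argument), one extracts an infinite sequence $x^{(1)}, x^{(2)}, \dots$ of solutions with rapidly increasing heights such that no proper subspace contains infinitely many of them and the local directions stabilise in each $\mathbb{Q}_p$ with $p\in S$.

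Next I would normalise. Applying Minkowski's theorem on successive minima, in its $S$-arithmetic form, to the convex bodies cut out by the $|L_{i,p}(\cdot)|_p$, one can replace the given forms, after a unimodular change of basis at each place, by forms that are close to the standard coordinate forms; the hypothesis $\prod_{p\in S}\prod_{i=1}^n |L_{i,p}(x)|_p \le H(x)^{-\varepsilon}$ then translates into the assertion that a product point $\mathbf{y} = x^{(i_1)} \otimes \cdots \otimes x^{(i_n)}$, assembled from $n$ widely separated members of the sequence, lies so close (at the places of $S$) to a distinguished flag that any auxiliary polynomial adapted to it is forced to vanish there to high order. Concretely, one fixes degrees $d_1 \gg d_2 \gg \cdots \gg d_n$ and, via Siegel's lemma in its Bombieri--Vaaler absolute form over number fields, constructs a nonzero polynomial $P$ that is multihomogeneous of degree $d_j$ in the $j$th block of $n$ variables, has tightly controlled height, and vanishes to prescribed order along the relevant flags at every place of $S$; the construction is purely a dimension count.

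The crux, and the step I expect to be the main obstacle, is the complementary \emph{non-vanishing} statement: one must show that $P$ does not vanish at $\mathbf{y}$ to an order much larger than $\varepsilon(d_1 + \cdots + d_n)$. This is the many-variable generalisation of Roth's lemma; in modern treatments it is supplied by Schmidt's original index argument, by the Esnault--Viehweg Dyson lemma, or most cleanly by Faltings' product theorem, and it is the genuinely deep input. Granting it, the product formula together with the hypothesis forces the index of $P$ at $\mathbf{y}$ to be large, contradicting the upper bound just obtained; hence only finitely many subspaces are needed. For the fully quantitative $S$-adic version one would instead establish the absolute parametric Subspace Theorem of Evertse--Schlickewei and specialise the parameter, but the skeleton --- compactness, successive minima, auxiliary polynomial via Siegel's lemma, non-vanishing, product formula --- is unchanged.
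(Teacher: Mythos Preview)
The paper does not prove Theorem~\ref{thm:ss2} at all: it explicitly states in the introduction that the variants of the Subspace Theorem ``will not [be] prove[d] \dots\ as the proofs are beyond the scope of this work,'' and the theorem is simply quoted for later use. So there is no proof in the paper for your proposal to be compared against.

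That said, your outline is a faithful high-level summary of the Schmidt--Schlickewei--Evertse approach as it appears in the literature, and the skeleton you describe (compactness and extraction of a sequence, normalisation via successive minima, construction of an auxiliary polynomial by Siegel's lemma, a Roth-type non-vanishing lemma or Faltings' product theorem, and the final product-formula contradiction) is correct in spirit. You are also right to flag the non-vanishing step as the deep input. What you have written is an honest sketch rather than a proof: each of the five steps hides substantial technical work (the $S$-arithmetic geometry of numbers, the precise index calculus, the height bounds in Bombieri--Vaaler), and filling them in would occupy dozens of pages. For the purposes of this survey paper that is entirely appropriate, and indeed more than the paper itself attempts; just be clear that you are outlining the standard strategy and citing Schmidt, Schlickewei, and Evertse for the details, not claiming a self-contained argument.
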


The power and utility of the Subspace Theorem is already evident in the above
forms but
there is a corollary, often itself called the Subspace Theorem, which makes even more
applications possible.  This corollary was originally given by Evertse,
Schlickewei and Schmidt \cite{Ever02}.  We present the version with the best known
bound due to Amoroso and Viada \cite{Amor09}.
\begin{theorem}[Subspace Theorem III] \label{thm:ss3} Given an algebraically
   closed field $K$ and a subgroup $\Gamma$ of $K$ of finite rank $r$, suppose 
   $a_1, a_2, \dots, a_n\in K^*$.  Then the number of solutions of the equation 
   \begin{equation}
      a_1z_1+a_2z_2+\dots+a_nz_n=1 \label{eq:sum}
   \end{equation}
   with $z_i\in\Gamma$ and no subsum on the left hand side vanishing is at most 
   \[A(n,r)\le (8n)^{4n^4(n+nr+1)}.\]
\end{theorem}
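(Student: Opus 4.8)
The plan is to derive Theorem~\ref{thm:ss3} from the $S$-unit version of the Subspace Theorem (Theorem~\ref{thm:ss2}), following the Evertse--Schlickewei--Schmidt strategy; the explicit bound of Amoroso and Viada requires a more delicate quantitative argument, so here I will only sketch how one obtains \emph{some} finite bound $A(n,r)$ and indicate where the numerical refinements enter. First I would reduce to the case where $\Gamma$ is finitely generated and then, after a specialization argument, to the case $K = \overline{\mathbb{Q}}$: a solution of \eqref{eq:sum} involves only finitely many elements of $K$, so one may replace $K$ by a finitely generated extension of $\mathbb{Q}$, and a standard specialization (sending transcendentals to suitable algebraic numbers while preserving non-degeneracy of the relevant subsums) transports the problem to a number field. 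One also reduces to $\Gamma$ torsion-free of rank $r$ by absorbing the torsion into the coefficients $a_i$ at the cost of a bounded factor.

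Next I would set up the height machinery. Fix generators $\gamma_1, \dots, \gamma_r$ of $\Gamma$ and let $S$ be the finite set of places of the number field containing the infinite places and all places where some $\gamma_j$ or some $a_i$ is not a unit; then every $z_i \in \Gamma$ is an $S$-unit up to a factor from the coefficients. Writing $z_i = a_i w_i$ one rephrases \eqref{eq:sum} as $w_1 + \dots + w_n = 1$ with the $w_i$ ranging over a finitely generated multiplicative group of $S$-units. The classical argument now partitions solutions according to the "dominant" place: for each solution there is a place $v \in S$ and an index $i$ for which $|w_i|_v$ is comparably large, and on each such class one shows that the linear forms $x_1, \dots, x_{i-1}, x_1 + \dots + x_n, x_{i+1}, \dots, x_n$ (suitably chosen at each place of $S$) have product small relative to the height, so Theorem~\ref{thm:ss2} applies and confines the solutions to finitely many proper subspaces.

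Then I would run the induction on $n$. A solution lying in a fixed proper subspace satisfies a nontrivial linear relation $\sum_{i \in I} c_i w_i = 0$; splitting into minimal vanishing subsums and rescaling, each piece becomes an equation of the same shape in fewer variables with coefficients still in a finite-rank group, and the non-vanishing-subsum hypothesis is inherited. By the inductive hypothesis each such lower-dimensional equation has boundedly many solutions, and combining over the finitely many subspaces produced by the Subspace Theorem and the finitely many partitions by dominant place yields a finite bound $A(n,r)$. The base case $n = 1$ (or $n=2$) is immediate.

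The main obstacle is making this \emph{quantitative} with the stated bound $(8n)^{4n^4(n+nr+1)}$: one needs an explicit count of the number of subspaces in Theorem~\ref{thm:ss2} (which is the hard quantitative form of the Subspace Theorem, not proved here), explicit control of the number of dominant-place classes (polynomial in $|S|$, hence in $n$ and $r$), and a careful bookkeeping of how the bound compounds through the induction on $n$ — each level multiplies by a controlled factor, and one must verify the resulting recursion closes under the claimed expression. The specialization step also requires care to ensure the number of solutions does not increase and that only finitely many specializations need to be avoided; this is standard but technical. Since the full quantitative optimization is due to Amoroso and Viada and rests on their sharp subspace count, I would present the qualitative finiteness in detail and cite \cite{Amor09} for the precise constant.
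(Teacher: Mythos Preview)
The paper does not prove Theorem~\ref{thm:ss3} at all. It is stated there as a known result, with the qualitative version attributed to Evertse, Schlickewei and Schmidt \cite{Ever02} and the explicit bound to Amoroso and Viada \cite{Amor09}; the introduction says explicitly that the variants of the Subspace Theorem will be stated but not proved, as the proofs are beyond the scope of the survey. So there is no ``paper's own proof'' to compare your proposal against.

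As for your sketch on its own merits: it is a reasonable high-level outline of the Evertse--Schlickewei--Schmidt strategy (specialization to number fields, reduction to $S$-units, application of the $p$-adic Subspace Theorem with a well-chosen system of linear forms, and induction on $n$ via the subspaces produced). You are also right to flag that the stated numerical bound $(8n)^{4n^4(n+nr+1)}$ is not something one gets from this outline alone; it requires the sharp subspace-count of Amoroso and Viada, which is a substantial piece of work in itself. One point to be careful about in the reduction step: absorbing torsion into the coefficients does not simply cost ``a bounded factor'' unless you already control the torsion in terms of $n$ and $r$ alone (and not, say, the ambient field), so that step hides real content. But since the paper only cites the result, any correct sketch you give goes beyond what the paper itself provides.
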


The Erd\H{o}s unit distance problem is an important problem in combinatorial
geometry.  It asks for the maximum possible number of unit distances between $n$
points in the plane.  This problem is still open but recently Frank de Zeeuw and 
the authors have made progress towards this problem when the distances
considered come from certain groups.

The structure of this paper will be as follows.  In the next section we give a
number of well-known applications of the Subspace Theorem.  In
Section~\ref{sec:comb} we give combinatorial applications.  In particular,
Section~\ref{subsec:mann} contains the special case of the Subspace Theorem via
Mann's Theorem, Section~\ref{subsec:unit} gives unit distance bounds and
Section~\ref{subsec:sumProduct} gives sum-product estimates.

\subsection*{Acknowledgements}

The work in Section~\ref{subsec:mann} is joint work with Frank de Zeeuw. The
authors are thankful to Jarik Ne\v set\v ril for the encouragement to write this
survey. We are also thankful to the organizers of the workshop in Pisa,
"Geometry, Structure and Randomness in Combinatorics", where the parts of this
paper were presented.

\section{Number theoretic applications}
\label{sec:nt}

\subsection{Transcendental numbers}

Adamczewski and Bugeaud showed that all irrational automatic numbers are
transcendental using the Subspace Theorem.  An \emph{automatic number} is a
number for which there exists an integer $b>0$ such that when the number is
written in $b$-ary form it is the output of a finite automaton with input the
natural numbers written from right to left.  For more details see \cite{Adam07}
or \cite{Bilu07}.

Here we will use a method similar to the proof of Theorem~3.3 in \cite{Bilu07}
to show:
\begin{theorem}\label{thm:trans}
   The number $\alpha$ given by the infinite sum \[\alpha = \sum_{n\ge 1} \frac{1}{2^{2^n}}\] is transcendental.
\end{theorem}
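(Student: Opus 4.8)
The plan is to assume toward a contradiction that $\alpha$ is algebraic and to refute this using the $p$-adic Subspace Theorem (Theorem~\ref{thm:ss2}) with $S=\{\infty,2\}$. First I would record that $\alpha$ is irrational: in its binary expansion the only ones occur at positions $2^n$, so the runs of zeros between consecutive ones have lengths $2^n-1\to\infty$ and the expansion is not eventually periodic. Next, writing $\alpha_N=\sum_{n=1}^N 2^{-2^n}=p_N/q_N$ with $q_N=2^{2^N}$ and $p_N=\sum_{n=1}^N 2^{\,2^N-2^n}\in\mathbb{Z}$, I would note the elementary facts that $p_N$ is odd (the $n=N$ term is $1$ and all the others are even), that $q_{N+1}=q_N^{\,2}$ and $p_{N+1}=q_Np_N+1$, and that summing the lacunary tail as a rapidly converging geometric-type series gives $0<\alpha-\alpha_M<2q_M^{-2}=2q_{M+1}^{-1}$ for all $M\ge 1$; in particular $0<q_{N+1}\alpha-p_{N+1}<2q_{N+1}^{-1}$.

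I would then apply Theorem~\ref{thm:ss2} in dimension $n=3$ to the integer points $x^{(N)}=(q_{N+1},\,p_{N+1},\,1)$, the role of the dummy coordinate $x_3=1$ being to let us exploit the archimedean smallness and the $2$-adic divisibility at the same time. At the infinite prime take the forms $L_{1,\infty}=x_1$, $L_{2,\infty}=x_3$, $L_{3,\infty}=\alpha x_1-x_2$, which are linearly independent and have algebraic coefficients precisely because $\alpha$ was assumed algebraic (this is the only place algebraicity enters); then $\prod_i|L_{i,\infty}(x^{(N)})|=q_{N+1}\cdot 1\cdot|\alpha q_{N+1}-p_{N+1}|<2$. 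At $p=2$ take the (again linearly independent) forms $L_{1,2}=x_1$, $L_{2,2}=x_2-x_3$, $L_{3,2}=x_3$; since $q_{N+1}=2^{2^{N+1}}$ and $p_{N+1}-1=q_Np_N$ with $q_N=2^{2^N}$ and $p_N$ odd, $\prod_i|L_{i,2}(x^{(N)})|_2=q_{N+1}^{-1}\cdot q_N^{-1}\cdot 1=q_{N+1}^{-3/2}$. As $x^{(N)}$ is a vector of positive integers with $p_{N+1}<q_{N+1}$, its height is just the maximum norm, so $H(x^{(N)})=q_{N+1}$. Hence $\prod_{p\in S}\prod_i|L_{i,p}(x^{(N)})|_p<2q_{N+1}^{-3/2}\le q_{N+1}^{-1/2}=H(x^{(N)})^{-1/2}$ for every $N\ge 1$, so Theorem~\ref{thm:ss2} applies with $\varepsilon=1/2$ and the points $x^{(N)}$ lie in finitely many proper linear subspaces of $\mathbb{Q}^3$.

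Consequently infinitely many $x^{(N)}$ lie on one fixed proper subspace, giving a nonzero $(c_1,c_2,c_3)\in\mathbb{Q}^3$ with $c_1q_{N+1}+c_2p_{N+1}+c_3=0$ for infinitely many $N$. Dividing by $q_{N+1}$ and letting $N\to\infty$, using $q_{N+1}\to\infty$ and $p_{N+1}/q_{N+1}=\alpha_{N+1}\to\alpha$, yields $c_1+c_2\alpha=0$; if $c_2=0$ this forces $c_1=c_3=0$, a contradiction, so $c_2\ne0$ and $\alpha=-c_1/c_2\in\mathbb{Q}$, contradicting irrationality. That contradiction proves $\alpha$ transcendental. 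The step I expect to be the crux is the design of the three-dimensional configuration and its six linear forms so that the factor $q_{N+1}^{-3/2}$ coming from the powers of $2$ dividing $q_{N+1}$ and $p_{N+1}-1$ can be combined with the Roth-type gain $|\alpha q_{N+1}-p_{N+1}|<2q_{N+1}^{-1}$: the archimedean approximation alone has exponent exactly $2$ and is useless for Roth's theorem or for the Subspace Theorem, and it is only the $2$-adic contribution that pushes the total product below $H(x^{(N)})^{-\varepsilon}$.
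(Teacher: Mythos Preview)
Your proof is correct. Both you and the paper apply Theorem~\ref{thm:ss2} in dimension $3$ with $S=\{\infty,2\}$, but the constructions are genuinely different. The paper follows the Adamczewski--Bugeaud template: it approximates $\alpha$ by rationals with \emph{eventually periodic} binary expansions, $\pi=M/(2^a(2^b-1))$, which leads to an integer point with two power-of-$2$ coordinates and to the archimedean form $\alpha x_1-\alpha x_2-x_3$ in which $\alpha$ occurs twice; all the $2$-adic gain comes from those two coordinates, and the final exponent is $1/8$. You instead take the truncations $\alpha_{N+1}=p_{N+1}/q_{N+1}$ directly and exploit the recursion $p_{N+1}-1=q_Np_N$ with $p_N$ odd to manufacture an extra $2$-adic factor via the form $x_2-x_3$; your archimedean form $\alpha x_1-x_2$ is the plain Roth-type one, and you obtain exponent $1/2$. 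Your route is shorter and sharper for this particular $\alpha$ (and nicely illustrates your closing remark that the archimedean gain alone, being exactly of exponent $2$, is useless without the $2$-adic boost), while the paper's route is the specialization of a method that applies uniformly to any real number whose base-$b$ expansion exhibits long repeated blocks, which is the point for automatic numbers.
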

Kempner showed in the early twentieth century that a large
class of numbers defined similarly to $\alpha$ are transcendental \cite{Kemp16}.
The Subspace Theorem provides a tidy proof of this fact.

\begin{proof}[Proof of Theorem~\ref{thm:trans}]
   Consider the binary expansion: \[\alpha =
   \frac{1}{4}+\frac{1}{16}+\frac{1}{256}+\frac{1}{65536}+\dots =
0.0101000100000001\hdots_2.\]  So the
binary expansion of $\alpha$ consists of sections of zeros of increasing length
separating solitary ones.  Thus the expansion is not periodic and hence $\alpha$
is not rational.  We let $b_n$ be the string given by the first $n$ digits of
this expansion.  One can check that each $b_n$ has two disjoint substrings of
zeros of length $n/8$.

Assume $\alpha$ is not transcendental.  Then it is algebraic.  Now each $b_n$
starts with a string $AOBO$, where $O$ is a string of zeroes, the length of $O$ 
is at least $n/8$ and $A$
and $B$ might have length zero.  We will use the rational number represented in
base $2$ by
$AOBOBO\hdots$ to approximate $\alpha$.  Call this number $\pi$.  Then \[\pi =
\frac{M}{2^a(2^b-1)}\] where $M\in\mathbb{Z}$ and $a$ and $b$ are the lengths of
the strings $A$ and $OB$ respectively.  Clearly $b\ge n/8$ and $a+b\le n$ since
$AOB$ is a substring of $b_n$.  Since $\alpha$ starts with $b_n$ we have
\[|\alpha-\pi| \le \frac{1}{2^{a+b+n/8}} \implies
|2^{a+b}\alpha-2^a\alpha-M| \le \frac{1}{2^{n/8}}.\]

Now we apply the Subspace Theorem.  We let $S=\{2, \infty\}$ and 
\begin{align*}
   L_{1,\infty}(x) &= x_1, &L_{2,\infty}(x) &= x_2, &L_{3,\infty}(x) &= \alpha
   x_1-\alpha x_2 - x_3, \\
   L_{1,2}(x) &= x_1, &L_{2,2}(x) &= x_2, &L_{3,2}(x) &= x_3.
\end{align*}
Note that by our assumption that $\alpha$ is not transcendental the linear form 
$L_{3,\infty}$ has algebraic coefficients.  Let $x=(2^{a+b}, 2^{b}, M)$.  Now 
$|M|\le 2^{a+b}$ since $0<\pi<1$.  So $\|x\|
\le 2^{a+b}\le 2^n$.  Multiplying the absolute values of the linear forms
together we get
\begin{align*}\prod_{p\in S}\prod_{i=1}^3|L_{i,p}(x)|\quad &=&
   &|2^a|_2|2^a|_{\infty}|2^{a+b}|_2|2^{a+b}|_{\infty}|M|_2|2^{a+b}\alpha-2^a\alpha-M|_{\infty}
   \\
&\le& &\frac{1}{2^{n/8}} \\ &\le& &\frac{1}{\|x\|^{1/8}} \le 
   H(x)^{-1/8}.\end{align*}  The first two inequalities hold
because $|\alpha-\pi|\le 2^{-a-b-n/8}$ and $|2|_2|2|_{\infty}=1$.

We can do this for each $n$ and $b=b(n)$ increases as $n$ increases since $b\ge
n/8$.  Thus infinitely many of the vectors $x=x(n)$ are distinct.  By
Theorem~\ref{thm:ss2} these vectors are contained in finitely many subspaces of
$\mathbb{Q}^3.$
Thus one of these subspaces contains infinitely many of them.  That is, there exist
$c,d,e\in\mathbb{Q}$ such that \[c2^{a(n)}+d2^{a(n)+b(n)}+eM(n) = 0\] for
infinitely many $n$.  $e$ cannot be zero since $b(n)\to \infty$ as $n\to
\infty$.  Dividing by $2^{a(n)}(2^{b(n)}-1)$ and taking limits we get
$\alpha=-d/e$ so $\alpha$ is rational.  This is a contradiction.  Thus
$\alpha$ must be transcendental.
\end{proof}

\subsection{Linear recurrence sequences}

A linear recurrence sequence is a sequence of numbers where the first few terms
are given and the higher order terms are given by a recurrence relation.  A
famous example is the Fibonacci sequence $\{F_n\}$ where $F_1 = F_2 = 1$ and 
$F_n=F_{n-1}+F_{n-2}$ for $n>2$.  More formally, a \emph{linear
recurrence sequence} consists of constants $a_1, \dots, a_k$ in a field $K$ for 
some $k>0$ along with a sequence $\{R_n\}_{n=1}^{\infty}$ with $R_i\in K$ for $1 
\le i \le k$
 and \[R_n = a_1R_{n-1} + a_2R_{n-2} + \dots + a_kR_{n-k}, \quad \mathrm{for}\ 
 n>k.\]  If
 $\{R_n\}$ is not expressible by any shorter
recurrence relation then it is said to have order $k$.  In this case each 
$a_i\ne 0$.

We are interested in the structure of the zero set of a linear recurrence
sequence.  This is the set \[S(\{R_m\})=\{i\in\mathbb{N} : R_i = 0\}.\]  The
Skolem-Mahler-Lech Theorem 
states that this set consists of the union of finitely many points and arithmetic
progressions.  Schmidt has given a quantitative bound for this theorem using
various tools including the Subspace Theorem.

We will show a special case of this theorem using Theorem~\ref{thm:ss3}.  We
will restrict our attention to simple nondegenerate linear recurrence sequences.
To define such sequences we need to define the companion polynomial
of the recurrence sequence.  If $\{R_n\}$ is given as above then the
\emph{companion polynomial} of $\{R_n\}$ is
$C(x)=x^k-a_1x^{k-1}-\dots-a_{k-1}x-a_k$.  Suppose the roots of this polynomial
are $\alpha_1, \dots, \alpha_{\ell}$ with multiplicity $b_1, \dots, b_{\ell}$
respectively.  Clearly, each $\alpha_i$ is nonzero.  If the companion polynomial 
has $k$ distinct roots it is called
\emph{simple}.  If $\alpha_i/\alpha_j$ is not a root of unity for any $i\ne j$
then the sequence is called \emph{nondegenerate}.
\begin{theorem}
   Suppose $\{R_m\}$ is a simple nondegenerate linear recurrence sequence of 
   order $k$.  Then
   \[|S(\{R_m\})| \le (8k)^{8k^6}.\]
\end{theorem}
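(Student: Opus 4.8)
The plan is to use the classical Binet-type formula for simple linear recurrence sequences and then feed it into Theorem~\ref{thm:ss3}. First I would recall that since $\{R_m\}$ is simple, its companion polynomial has $k$ distinct nonzero roots $\alpha_1,\dots,\alpha_k$, and a standard argument (solving the Vandermonde system determined by the initial values $R_1,\dots,R_k$) shows there exist constants $c_1,\dots,c_k$, lying in the splitting field $K$ of the companion polynomial, such that
\[
R_m = c_1\alpha_1^m + c_2\alpha_2^m + \dots + c_k\alpha_k^m \qquad \text{for all } m\ge 1.
\]
If $\{R_m\}$ has genuine order $k$ then each $c_i\ne 0$; I would note this, since it is what makes the coefficients of the $S$-unit equation nonzero. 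The field $K$ can be taken algebraically closed by passing to $\overline{\mathbb{Q}}$ (or $\overline{K}$), as required by Theorem~\ref{thm:ss3}.

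Next I would identify the relevant finite-rank group. Let $\Gamma$ be the subgroup of $\overline{K}^*$ generated by $\alpha_1,\dots,\alpha_k$; its rank $r$ satisfies $r\le k-1$, since these $k$ elements generate it (and one could even be sharper, but $r \le k-1$ — or simply $r \le k$ — suffices). Then for each index $m$ with $R_m=0$ we have
\[
c_1\alpha_1^m + c_2\alpha_2^m + \dots + c_k\alpha_k^m = 0,
\]
which after dividing through by the last term rearranges to
\[
(-c_1/c_k)(\alpha_1/\alpha_k)^m + \dots + (-c_{k-1}/c_k)(\alpha_{k-1}/\alpha_k)^m = 1,
\]
an equation of the shape \eqref{eq:sum} in $n = k-1$ variables $z_i = (\alpha_i/\alpha_k)^m \in \Gamma$, with fixed nonzero coefficients $a_i = -c_i/c_k \in \overline{K}^*$. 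Here I would use nondegeneracy: since $\alpha_i/\alpha_j$ is never a root of unity for $i\ne j$, the map $m\mapsto (z_1,\dots,z_{k-1})$ is injective, so distinct zeros $m$ give distinct solution tuples.

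The main obstacle — and the point that needs the most care — is the \emph{no vanishing subsum} hypothesis in Theorem~\ref{thm:ss3}: a priori a partition of $\{1,\dots,k\}$ could make some subsum of the $c_i\alpha_i^m$ vanish, and such an $m$ is not directly controlled by one application of the theorem. The standard fix is to argue by induction on $k$. Partition the zero set according to which maximal subset $T \subseteq \{1,\dots,k-1\}$ has no vanishing subsum and contributes a genuine solution of a reduced equation; there are at most $2^{k}$ such subsets, and for each one either the reduced equation has full support (Theorem~\ref{thm:ss3} applies directly with $n\le k-1$, $r\le k-1$) or it decomposes into smaller vanishing pieces handled by the inductive hypothesis on sequences of smaller order. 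Finally I would assemble the bound: each application of Theorem~\ref{thm:ss3} contributes at most $A(k-1, k-1) \le (8(k-1))^{4(k-1)^4((k-1)+(k-1)^2+1)}$ solutions, and multiplying by the number of subsum patterns and simplifying the exponents crudely (using $n,r \le k$, so the exponent $4n^4(n+nr+1)$ is at most $4k^4(k^2+k+1) \le 8k^6$ for $k\ge 1$, with room to absorb the $2^{k}$ factor and the base change from $8(k-1)$ to $8k$) yields the claimed $|S(\{R_m\})| \le (8k)^{8k^6}$. I expect the only real content beyond bookkeeping to be the induction organizing the vanishing-subsum cases; everything else is routine estimation.
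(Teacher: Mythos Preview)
Your approach is essentially the paper's: obtain the Binet formula $R_m=\sum_i c_i\alpha_i^m$ (the paper derives it via matrix diagonalisation rather than the Vandermonde system, a cosmetic difference), apply Theorem~\ref{thm:ss3} over the rank-$\le k$ group $\langle\alpha_1,\dots,\alpha_k\rangle$, and use nondegeneracy to ensure distinct $m$ yield distinct solution tuples. In fact you are more careful than the paper, which simply quotes $A(k,k)$ for the homogeneous equation and does not address the no-vanishing-subsum hypothesis at all; your proposed fix---summing over the at most $2^k$ minimal vanishing subsets and bounding each by an instance of $A(\ell-1,r)$---is the standard completion and still fits comfortably under the stated $(8k)^{8k^6}$ bound.
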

\begin{proof}
   We can express the recurrence relation using a matrix equation:
   \[\begin{bmatrix} a_1 & a_2 & \dots & a_{k-1} & a_k \\ 1 & 0 &
   \dots & 0 & 0 \\ 0 & 1 & \dots & 0 & 0 \\ \vdots & \vdots & \ddots & \vdots &
   \vdots
   \\ 0 & 0 & \dots & 1 & 0 \end{bmatrix}^n\begin{bmatrix} R_k \\
   R_{k-1} \\ \vdots \\ R_1 \end{bmatrix} = \begin{bmatrix} R_{k+n}
   \\ R_{k-1+n} \\ \vdots \\ R_{1+n}\end{bmatrix}.\]  We call the matrix above
   $A$.  The characteristic polynomial of $A$ is given by \[\chi(\lambda) =
   \lambda^k-a_1\lambda^{k-1}-\dots-a_k.\]  This is the same as the companion
   polynomial of $\{R_n\}$.  Thus $A$ has distinct nonzero eigenvalues and so
   can be diagonalized.  Thus multiplying out the left hand side we can solve
   for $R_n$ to get 
   \[R_n=c_1\alpha_1^n+c_2\alpha_2^n+\dots+c_k\alpha_k^n\quad\mathrm{for\ every\
   }n>k.\]  Then applying
   Theorem~\ref{thm:ss3} to the equation $c_1x_1+c_2x_2+\dots+c_kx_k=0$ with 
   solutions from the group of rank at most $k$ generated by
   $\{\alpha_1,
   \dots, \alpha_k\}$ we get that the number of solutions is at most \[A(k, k) = 
   (8k)^{4k^4(k+k^2+1)} \le (8k)^{8k^6}.\]
   Since the sequence is nondegenerate we cannot have two values $n, n'$ giving 
   the same value for $\alpha_i^n$ and $\alpha_i^{n'}$ for each $i$, hence each 
   solution corresponds
   to a unique value from $S(\{R_m\})$.
\end{proof}

\section{Combinatorial applications}
\label{sec:comb}

\subsection{A proof of a very special case of the Subspace Theorem}
\label{subsec:mann}

Theorem~\ref{thm:ss3} gives a bound on the number of nondegenerate solutions of
a linear equation from a multiplicative group with rank not too large.  What
happens if the group in question has rank zero.  This corresponds to solutions
that are roots of unity.  The Subspace Theorem can then be seen as a
generalisation of the following results which follows from a theorem of 
H.B.~Mann from 1965.
\begin{theorem} \label{thm:mann}
   Given $(a_1, \dots, a_k)\in\mathbb{Q}^k$, consider the equation
   \[a_1x_1+a_2x_2+\dots+a_kx_k=0.\] The number of solutions $(\omega_1,
   \dots, \omega_k)$ of this equation with the $\omega_i$'s roots of unity and no 
   vanishing subsum is at
   most $(k\cdot \Theta(2k))^k$ where \[\Theta(k)= \mathop{\prod_{p\leq k}}_{p\
   \mathrm{prime}}p.\]
\end{theorem}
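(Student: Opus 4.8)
The plan is to derive the estimate from H.\,B.~Mann's 1965 theorem on vanishing sums of roots of unity, which I will quote rather than prove. The form needed is: if $b_1\zeta_1+\dots+b_m\zeta_m=0$ with every $b_i\in\mathbb{Q}^{*}$, every $\zeta_i$ a root of unity, and no vanishing subsum, then each ratio $\zeta_i\zeta_j^{-1}$ is a root of unity whose order divides $\Theta(m)$; equivalently, all the $\zeta_i$ lie in a single coset of the group $\mu_{\Theta(m)}$ of $\Theta(m)$-th roots of unity. The content of Mann's theorem is exactly this rigidity of primitive vanishing $\mathbb{Q}$-relations, and everything below is bookkeeping around it.

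First I would normalise a solution. If $(\omega_1,\dots,\omega_k)$ satisfies the equation with no vanishing subsum, then no $a_i$ can be $0$ (else that single term is a vanishing subsum) and no $\omega_i$ is $0$, so we may divide the relation through by $\omega_k$; writing $\eta_i=\omega_i\omega_k^{-1}$ gives
\[
a_1\eta_1+\dots+a_{k-1}\eta_{k-1}+a_k=0,
\]
again a rational relation among the $k$ roots of unity $\eta_1,\dots,\eta_{k-1},1$, still with no vanishing subsum (a subsum of the new relation vanishes exactly when the corresponding subsum of the original does). Since the original equation is homogeneous, two solutions differing by a common root-of-unity factor have the same normalisation, so it suffices to bound these normalised solutions $(\eta_1,\dots,\eta_{k-1},1)$. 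Applying Mann's theorem to the displayed $k$-term relation shows that each $\eta_i=\eta_i/1$ has order dividing $\Theta(k)$, hence $\eta_i\in\mu_{\Theta(k)}\subseteq\mu_{\Theta(2k)}$; in fact Mann already yields the smaller modulus $\Theta(k)$, and only the modulus $\Theta(2k)$ from the statement is needed.

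It then remains to count: each of the $k-1$ coordinates $\eta_1,\dots,\eta_{k-1}$ takes one of the $\Theta(2k)$ values in $\mu_{\Theta(2k)}$, so there are at most $\Theta(2k)^{k-1}\le(k\cdot\Theta(2k))^{k}$ normalised solutions, which is the asserted bound (with a good deal of slack). The only real obstacle is Mann's theorem itself, which I would not reprove: its proof is a descent on the common order of the ratios $\zeta_i\zeta_j^{-1}$, carried out inside the cyclotomic field these roots of unity generate, the mechanism being that if a prime $p$ divides that order then partitioning the $m$ terms according to a suitable coset structure forces shorter vanishing subsums, contradicting primitivity unless $p\le m$ and $p$ occurs to the first power; iterating forces the order to be squarefree with all prime factors at most $m$, i.e.\ to divide $\Theta(m)$. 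Thus the genuine number-theoretic difficulty sits entirely inside the quoted result, and the argument above merely packages it into the stated combinatorial form.
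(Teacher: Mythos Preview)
Your argument is correct, and in fact slightly sharper than what the paper does. Both proofs rest on Mann's lemma (which the paper proves and you quote), but they deploy it differently. You normalise a single solution to $a_1\eta_1+\dots+a_{k-1}\eta_{k-1}+a_k\cdot 1=0$ and apply Mann once to this $k$-term relation, forcing every $\eta_i$ into $\mu_{\Theta(k)}$ and giving at most $\Theta(k)^{k-1}$ normalised solutions. The paper instead takes two solutions, subtracts them to get a $2k$-term vanishing sum, decomposes that into minimal vanishing subsums, and applies Mann to each piece; this yields only $(\omega_j'/\omega_i)^{\Theta(2k)}=1$ for \emph{some} index $i$, whence the factor $k\cdot\Theta(2k)$ per coordinate. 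The reason for the paper's detour is that its counting step actually bounds the larger set $S(a,k)$ of root-of-unity tuples admitting \emph{some} rational relation summing to a fixed $a\neq 0$, not just the one with the prescribed coefficients $a_1,\dots,a_k$; for that more uniform statement one cannot fix a single relation to feed into Mann, and the pairwise comparison becomes necessary. For the theorem as stated, with the $a_i$ fixed, your direct route is cleaner and explains why the bound has slack.

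One small remark: as you implicitly note, the homogeneous equation literally has infinitely many root-of-unity solutions once it has one, so the count must be read projectively. Your normalisation $\omega_k=1$ makes this explicit; the paper achieves the same effect by working with sums equal to a nonzero value $a$ rather than to $0$.
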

Note that the logarithm of the function $\Theta$ above is an important function 
in number theory called the first Chebyshev function.

Theorem~\ref{thm:mann} along with Lemma~\ref{lem:mann} below were proved by 
Frank de Zeeuw and the authors in \cite{Schw12a}.  This theorem provides another 
starting point for the development of the Subspace Theorem.
The roots of unity give a relatively simple example of an infinite 
multiplicative
group.  We will give the proof of Theorem~\ref{thm:mann} below.  First we prove 
Lemma~\ref{lem:mann} which was Mann's result
mentioned above \cite{Mann65}.

\begin{lemma}[Mann] \label{lem:mann}
Suppose we have \[a_1\omega_1+a_2\omega_2+\dots+a_k\omega_k =0,\] with $a_i\in 
\mathbb{Q}$, the
$\omega_i$ roots of unity, and no proper nontrivial subsum vanishing.  Then for 
every $i,j$, $\left(\omega_i/\omega_j\right)^{\Theta(k)} = 1$.
\end{lemma}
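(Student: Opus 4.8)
The plan is to reduce to a single cyclotomic field and exploit the linear independence of "most" roots of unity over $\mathbb{Q}$. First I would note that all the $\omega_i$ lie in some cyclotomic field $\mathbb{Q}(\zeta_N)$, where $\zeta_N = e^{2\pi i/N}$ and $N = \mathrm{lcm}$ of the orders of the $\omega_i$. After dividing the relation through by $\omega_k$ (which changes no subsum-vanishing hypothesis and multiplies every ratio $\omega_i/\omega_j$ by nothing), I may assume one of the roots of unity, say $\omega_k$, equals $1$. The goal then becomes: every $\omega_i$ is a $\Theta(k)$-th root of unity, i.e. the relevant $N$ can be taken to divide $\Theta(k)$.

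The key tool is the description of $\mathbb{Q}$-linear relations among roots of unity: the powers $\zeta_N^{j}$ for $0 \le j < \varphi(N)$ (equivalently, a $\mathbb{Z}$-basis of $\mathbb{Z}[\zeta_N]$ given by a suitable set of roots of unity) are linearly independent over $\mathbb{Q}$, and more usefully, if $q$ is a prime dividing $N$ then $1 + \zeta_q + \zeta_q^2 + \dots + \zeta_q^{q-1} = 0$ is essentially the only new relation introduced by the prime $q$. Concretely I would use the following structural fact: if $\sum a_i \omega_i = 0$ is a $\mathbb{Q}$-linear relation among roots of unity with $\textbf{no}$ proper subsum vanishing, and $q$ is a prime with $q \mid N$, then — writing each $\omega_i$ in terms of a primitive $q$-th root $\zeta_q$ times a root of unity of order prime to $q$ times a root of unity of order a power of $q$ dividing $N/q$ — minimality forces the "$\zeta_q$-part" to be uniform, and one shows $q \le k$. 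Summing over the primes $q \mid N$ gives that $N$ is squarefree with all prime factors at most $k$, hence $N \mid \Theta(k)$, which is exactly the claim $(\omega_i/\omega_j)^{\Theta(k)} = 1$.

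I would carry this out in the following order: (1) set up the cyclotomic field and normalize $\omega_k = 1$; (2) prove the squarefree claim — suppose $q^2 \mid N$; group the $\omega_i$ by their class modulo the subgroup generated by $\zeta_{N/q}$ and $\zeta_q$'s complement, observe the relation splits according to cosets of $\langle \zeta_q \rangle$, and use no-vanishing-subsum to derive a contradiction unless the exponent of $q$ in $N$ is at most $1$; (3) prove $q \le k$ for each prime $q \mid N$ — partition $\{1,\dots,k\}$ according to which power of $\zeta_q$ appears; each nonempty part that occurs must, by minimality, involve all $q$ residues (otherwise one gets a shorter relation after applying an element of $\mathrm{Gal}(\mathbb{Q}(\zeta_N)/\mathbb{Q}(\zeta_{N/q}))$), so $q \le k$; (4) conclude $N \mid \Theta(k)$ and hence $\omega_i^{\Theta(k)} = 1$ for all $i$, so $(\omega_i/\omega_j)^{\Theta(k)} = 1$.

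The main obstacle is step (3) (and the closely related step (2)): making precise how "no proper subsum vanishes" interacts with the Galois action to force every residue class mod $q$ to be represented. The Galois group $\mathrm{Gal}(\mathbb{Q}(\zeta_N)/\mathbb{Q})$ acts on the roots of unity appearing in the relation, and averaging or differencing Galois conjugates of the relation produces new vanishing $\mathbb{Q}$-linear combinations of roots of unity; the subtlety is to arrange these so that the resulting combination is a genuine $\textbf{proper sub}$collection of the original $\{\omega_i\}$ (after absorbing scalars), contradicting minimality unless the prime in question is small. Handling the interaction between the $q$-primary part and the prime-to-$q$ part of $N$ cleanly — so that the combinatorial count "number of residues forced $\le k$" really does yield $q \le k$ — is where the care is needed; everything else is bookkeeping with cyclotomic integers.
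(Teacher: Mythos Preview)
Your high-level strategy is exactly the paper's: normalize so that one $\omega_i$ equals $1$, let $m$ be the least common multiple of the orders of the $\omega_i$, and show that $m$ is squarefree with every prime factor at most $k$, whence $m\mid\Theta(k)$. The difference is in how your steps (2) and (3) are executed. Where you propose to use Galois averaging/differencing to force ``all residues mod $q$ to be represented,'' the paper uses a direct minimal-polynomial argument that handles squarefreeness and the bound $p\le k$ in one pass. Writing $m=p^{j}m'$ with $(p,m')=1$ and $\rho$ a primitive $p^{j}$-th root of unity, one decomposes each $\omega_i=\rho^{\sigma_i}\omega_i^*$ with $\sigma_i\in\{0,\dots,p-1\}$ and $\omega_i^*$ an $(m/p)$-th root of unity; grouping by $\sigma_i$ turns the relation into $f(\rho)=0$ for some $f\in K[x]$ of degree at most $p-1$, where $K=\mathbb{Q}(\omega_2^*,\dots,\omega_k^*)$. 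If $f\equiv 0$, minimality of $m$ forces a proper vanishing subsum. Since $[K(\rho):K]=\varphi(m)/\varphi(m/p)$ equals $p$ if $j>1$ and $p-1$ if $j=1$, having a nonzero $f$ of degree $\le p-1$ with $f(\rho)=0$ forces $j=1$; then $f$ is a scalar multiple of $1+x+\dots+x^{p-1}$, so all $p$ coefficients are nonzero and hence $p\le k$.

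Your Galois route can be completed along these same lines, but as written it is not yet a proof. Two specific loose ends: the sentence ``each nonempty part that occurs must, by minimality, involve all $q$ residues'' is garbled --- a part in the partition \emph{by residue} carries a single residue by construction; the correct claim is that \emph{all $q$ parts are nonempty}. And applying a single element of $\mathrm{Gal}(\mathbb{Q}(\zeta_N)/\mathbb{Q}(\zeta_{N/q}))$ to the relation gives another relation of the \emph{same} length, not a shorter one; to manufacture a proper subsum you must take a linear combination of conjugates and then verify that the surviving terms are genuinely a subcollection of the original $\omega_i$ (not new roots of unity). That verification is precisely what the minimal-polynomial formulation above makes automatic, so you can replace your steps (2)--(3) with it and the rest of your outline goes through unchanged.
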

\begin{proof}
Dividing by an appropriate factor we may assume that our equation is of the form 
$1 +
a_2\omega_2+\dots+a_k\omega_k=0$.  Then we just need to show that each
$\omega_i^{\Theta(k)}=1$.  We let $m$ be the smallest value such that 
$\omega_i^m=1$ for
$1\le i\le k$.  The proof proceeds by showing that $m$ is squarefree and any 
prime that divides $m$ cannot be larger than $k$.  This means $m\le \Theta(k)$.

Suppose $m = p^j m'$ with $p$ prime and $(p,m') = 1$.
Now we have
$\omega_i = \rho^{\sigma_i}\cdot \omega_i^*,$
with $\rho$ a primitive $p^j$-th root of unity.  So rewriting the sum grouping 
powers of $\rho$ we get
\[0 = 1+ (a_2\omega_2+\dots+a_k\omega_k) = 1+ (
\alpha_0+\alpha_1\rho+\dots+\alpha_{p-1}\rho^{p-1}),\] where, for each $i$, 
$\alpha_i\in K:=\mathbb{Q}(\omega_2^*, \dots, \omega_k^*)$ satisfies
\[\alpha_\ell = \sum_{i\in I_\ell} a_i\omega_i^*,\ \textrm{with}\ I_\ell=\{i : 
\sigma_i=\ell\}.\]  Let $f(x)=\alpha_{p-1}x^{p-1}+\dots+\alpha_1x+(1+\alpha_0)$.  
Then $f$ is a polynomial of degree at most $p-1$
over the field $K$ and $f(\rho)=0$.  If $f$ were identically zero then, by the 
minimality of $m$, we would have a vanishing subsum.

The degree of $\rho$ over $K$ gives us that $p$ divides $m$ only once.
Specifically since $[K(\rho):\mathbb{Q}] = [K(\rho):K][K:\mathbb{Q}]$ we have
\[\deg_K(\rho)=[K(\rho):K] = \frac{[K(\rho):\mathbb{Q}]}{[K:\mathbb{Q}]} = 
\frac{\phi(m)}{\phi(m/p)}.\]  This is $p$ if $j>1$ and $p-1$ if $j=1$.  But the 
degree of $f$ is at most $p-1$ so we must have $j=1$ since $\rho$ is a root of 
$f$.

Now $f$ must be a multiple of the irreducible polynomial $m$ of $\rho$ over $K$.  
But $m(x)=x^{p-1}+x^{p-2}+\dots+1$ so $f(x)=cm(x)$ where $c$ is a nonzero 
constant.  Thus $f$ has $p$ nonzero coefficients and thus so does the original 
sum giving $p\le k$.
\end{proof}

\begin{proof}[Proof of Theorem~\ref{thm:mann}]
   We first show that if we are given $a\in\mathbb{C}^*$ and two sums 
   $a_1\omega_1+\dots+a_k\omega_k = a$ and $a_1'\omega_1'+\dots+a_k'\omega_k'=a$ 
   with rational coefficients and no vanishing subsums then for any $\omega_j'$, 
   there is an $\omega_i$ such that $(\omega_j'/\omega_i)^{\Theta(2k)}=1$.

Since $a_1\omega_1+\dots+a_k\omega_k = a = a_1'\omega_1'+\dots+a_k'\omega_k'$, 
we get $a_1\omega_1+\dots+a_k\omega_k - a_1'\omega_1'-\dots-a_k'\omega_k'=0$.

This sum may have vanishing subsums so we consider minimal vanishing subsums of 
the form
   \[\sum_{i\in I_\ell}a_i\omega_i-\sum_{j\in I_\ell'}a_j'\omega_j' =0.\]
Each $\omega_j'$ is contained in such a minimal
subsum of length at most $2k$.  This subsum also contains some $\omega_i$ 
otherwise the original sum would have a vanishing subsum.  Now the previous 
lemma gives that $(\omega_j'/\omega_i)^{\Theta(2k)}=1$.

Note that above we require $a\in\mathbb{C}^*$.  If $a=0$ then the original sums 
will count as vanishing subsums when we consider the combined equation so 
Lemma~\ref{lem:mann} does not apply.

Now we can prove the theorem.  For $a\in\mathbb{C}^*$ and $k$ a positive 
integer define $S(a,k)$ 
as the set of $k$-tuples $(\omega_1,\dots,\omega_k)$, where each $\omega_i$ is a
root of unity, such that
there are $a_i\in\mathbb{Q}$ satisfying $a_1\omega_1+\dots+a_k\omega_k=a$ with no
vanishing subsums.

We fix a $k$-tuple $(\omega_1,\dots,\omega_k)\in S(a,k)$.  Given an element of
$S(a,k)$, for each 
$\omega_j'$ (the $j$-th coordinate of that element) there is an $i$
such that $\omega_i^{-\Theta(2k)}(\omega_j')^{\Theta(2k)}=1$.  So $\omega_j'$ is a root of the
polynomial $\omega_i^{-\Theta(2k)}x^{\Theta(2k)}=1$.  This polynomial has
$\Theta(2k)$ roots.  We have $k$ choices for $j$ so at most
$k\Theta(2k)$ choices for each $\omega_j'$.  This gives the required bound.
\end{proof}

This theorem can be used to prove Theorem~\ref{thm:unitMann} from the next
section.  We will show how using the Subspace Theorem instead allows the proof
of the stronger Theorem~\ref{thm:unit}.

\subsection{Unit distances}
\label{subsec:unit}

The unit distance problem was first posed by Erd\H{o}s in 1946 \cite{Erdo46}.
It asks for the maximum number, $u(n)$, of pairs of points with the same
distance in a collection of $n$ points in the plane.  By scaling the point set
one may assume that the most popular distance is one, hence the name of the
problem.  The problem seeks asymptotic bounds.  Erd\H{o}s gave a construction
using a $\sqrt{n}\times\sqrt{n}$ portion of a square lattice giving \[u(n) \ge
n^{1+c/\log\log n}.\]  Number theoretic bounds for the number of integer
solutions of the equation $x^2+y^2=a$ give the above inequality.  Erd\H{o}s
conjectured that the magnitude of $u(n)$ is close to this lower bound.  The
best known upper bound is $u(n)\le cn^{4/3}$.  A number of proofs have
been given showing $u(n)\le cn^{4/3}$ using tools such as cuttings, edge
crossings in graphs and the Szemer\'edi-Trotter Theorem.  The first proof was
due to Spencer, Szemer\'edi and Trotter \cite{Spen84}.  For more details of the
problem see \cite{Bras06}.  We will look at a special case of this problem when
the distances considered come from a multiplicative group with rank not too
large.  This does not seem to be a huge limitation as the unit distances from
the lower bound construction above come from such a group as will be explained
below.

Using Theorem~\ref{thm:mann} Frank de Zeeuw and the authors were able to show the
following theorem \cite{Schw12a}.  Two points in the plane are said to have
\emph{rational angle} if the angle that the line between these two points makes
with the $x$-axis is a rational multiple of $\pi$.
\begin{theorem}\label{thm:unitMann}
   Let $\varepsilon>0$.  Given $n$ points in the plane, the number of unit 
   distances with rational
   angle between pairs of
   points is less than $n^{1+\varepsilon}$.
\end{theorem}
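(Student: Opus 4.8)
The plan is to encode unit distances with rational angle as solutions to a linear equation in roots of unity and then apply Theorem~\ref{thm:mann}. Suppose $P$ is a set of $n$ points in the plane and consider the unit-distance graph $G$ whose edges are the pairs at distance exactly one with rational angle. If $p$ and $q$ are joined by such an edge, then identifying the plane with $\mathbb{C}$ we have $q - p = \omega$ for some root of unity $\omega$ (the unit vector at a rational angle is $e^{i\pi r}$ with $r \in \mathbb{Q}$, which is a root of unity). So every edge of $G$ corresponds to a root of unity, and the directions occurring are all roots of unity. The key structural observation is that if $p_0, p_1, \dots, p_k$ is any closed walk in $G$ (so $p_k = p_0$), then $\sum_{i=1}^{k} (p_i - p_{i-1}) = 0$, i.e. $\sum_i \omega_i = 0$ where each $\omega_i$ is a root of unity (allowing $\pm\omega$ for traversing an edge in either direction). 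Short cycles in $G$ thus give short vanishing sums of roots of unity.

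The first step is to bound the number of short cycles. I would argue that $G$ has few cycles of each bounded length: for a fixed length $\ell$, Theorem~\ref{thm:mann} (applied with $k = \ell$ and all $a_i = \pm 1$) shows that the number of \emph{direction sequences} $(\omega_1, \dots, \omega_\ell)$ that can arise from a cycle, after stripping vanishing subsums, is bounded by a function of $\ell$ only — crucially independent of $n$. Since a cycle is determined by its starting vertex (at most $n$ choices) and its sequence of directions, the number of cycles of length $\ell$ in $G$ is at most $n \cdot c_\ell$ for a constant $c_\ell$ depending only on $\ell$. In particular $G$ contains no copy of a "blown-up" short cycle beyond a bounded multiplicity. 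The standard consequence is that $G$ contains $O_\ell(n)$ cycles of length $\ell$, so $G$ is locally sparse: it has no $C_{2\ell}$-rich subgraph in the sense needed for the Kővári–Sós–Turán / even-cycle argument.

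The second step is to convert local sparsity into the edge bound. If $G$ had at least $n^{1+\varepsilon}$ edges, then by the Bondy–Simonovits even-cycle theorem (or the Erdős girth bound), a graph with $n$ vertices and $\Omega(n^{1+1/\ell})$ edges contains $\Omega(n^{2})$ copies of $C_{2\ell}$ — in any case superlinearly many, in fact far more than $c_{2\ell} \cdot n$ — contradicting the bound from the previous paragraph once $\ell$ is chosen with $1/\ell < \varepsilon$. More carefully: fix $\ell$ with $1/\ell < \varepsilon$; suppose $e(G) \ge n^{1+\varepsilon} \ge n^{1+1/\ell}$ for $n$ large; then $G$ contains at least (say) $n^{1.01}$ copies of $C_{2\ell}$, but the direction-counting argument caps this at $c_{2\ell} n$, a contradiction for large $n$. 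Hence $e(G) < n^{1+\varepsilon}$.

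The main obstacle I expect is the bookkeeping in the second step, namely making the passage from "few short cycles" to "few edges" fully rigorous when $G$ may be disconnected or have high-degree vertices, and ensuring the even-cycle count one extracts genuinely exceeds the linear-in-$n$ cap. One must be a little careful that cycles sharing direction sequences but based at different vertices are counted correctly, and that subsums vanishing in the closed-walk sum (which correspond to the walk returning to an earlier vertex, i.e. a shorter cycle) are handled by induction on $\ell$ rather than breaking the argument. A clean way to package this is: let $f(\ell)$ bound the number of cycles of length $\le \ell$ in $G$ divided by $n$; show $f(\ell) \le F(\ell)$ for an absolute function $F$ by induction using Theorem~\ref{thm:mann}; then invoke Bondy–Simonovits. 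The number-theoretic input (roots of unity, rational angles) is entirely concentrated in the elementary identity "edge $\Rightarrow$ direction is a root of unity" and in Theorem~\ref{thm:mann}; the rest is extremal graph theory.
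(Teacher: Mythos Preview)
Your overall plan---encode rational-angle unit distances as roots of unity and feed short closed walks in the unit-distance graph into Theorem~\ref{thm:mann}---matches the paper's intent. But the cycle-counting step has a real gap, not just bookkeeping. You claim that the number of direction sequences $(\omega_1,\dots,\omega_\ell)$ of an $\ell$-cycle is bounded by a constant $c_\ell$, and that vanishing subsums ``correspond to the walk returning to an earlier vertex.'' Neither is true. In a genuine cycle the vertices are distinct, so a vanishing subsum over a block of \emph{consecutive} indices is impossible; but nothing forbids a vanishing subsum over non-consecutive indices, and such a subsum has no interpretation as a shorter cycle in $G$. Concretely, every unit rhombus $p,\,p+\omega_1,\,p+\omega_1+\omega_2,\,p+\omega_2$ is a $4$-cycle with direction sequence $(\omega_1,\omega_2,-\omega_1,-\omega_2)$, and these sequences range over \emph{all} pairs of roots of unity---an unbounded family. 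So ``$\#\{\text{direction sequences}\}\le c_\ell$'' is false, your inductive reduction does not apply, and the conclusion $\#\{C_\ell\text{'s in }G\}\le c_\ell n$ is unproven. Without that linear cap, the comparison with Bondy--Simonovits collapses.

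The paper avoids this by counting \emph{nondegenerate paths} rather than cycles: a $k$-path $p_0p_1\cdots p_k$ is called nondegenerate precisely when no subsum of its direction vectors $u_i=p_{i+1}-p_i$ vanishes. With that definition the obstruction disappears by fiat, and Theorem~\ref{thm:mann} bounds the number of nondegenerate $k$-paths between any \emph{fixed} pair of endpoints $v,w$ by a constant depending only on $k$ (since the $u_i$ solve $\sum u_i=w-v$ with no vanishing subsum). On the other side, if $G$ has minimum degree $\ge \tfrac12 n^\varepsilon$ one shows that from every vertex there are at least $\prod_{\ell<k}(\delta(G)-2^\ell+1)$ nondegenerate $k$-paths, because at each extension step at most $2^\ell-1$ directions are forbidden (one per nonempty subset of the previous steps). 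Pigeonholing over endpoint pairs then yields the contradiction for $k$ roughly $1/\varepsilon$. If you rewrite your argument with nondegenerate paths in place of cycles, this is exactly the route that works; the Bondy--Simonovits detour is unnecessary.
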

These unit distances correspond to roots of unity.  The proof proceeds by
counting certain paths in
the unit distance graph and using Mann's Theorem to bound the number of edges.

Using the Subspace Theorem in place of Mann's Theorem one can instead consider
unit distances from a multiplicative group with rank not too large with respect
to the number of points \cite{Schw13b}.  Note that a unit distance in the plane 
can (and will) be considered as a complex number of unit length.  So all unit 
distances can be considered as coming from a subgroup of $\mathbb{C}^*$.
\begin{theorem}
   \label{thm:unit}
   Let $\varepsilon>0$.  There exist a positive integer $n_0$ and a constant 
   $c>0$ such that given $n>n_0$ points in the plane, the number of unit 
   distances coming from a subgroup $\Gamma \subset \mathbb{C}^*$ with rank 
   $r<c\log n$ is less than $ n^{1+\varepsilon}$.
\end{theorem}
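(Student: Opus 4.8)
The plan is to mimic the proof of Theorem~\ref{thm:unitMann}, with the Subspace Theorem in the form of Theorem~\ref{thm:ss3} playing the role that Mann's Theorem plays there. Identify the plane with $\mathbb{C}$ and let $G$ be the graph on the $n$ points in which $p$ and $q$ are joined when $|p-q|=1$ and $p-q\in\Gamma$; since adjoining the root of unity $-1$ to $\Gamma$ does not change its rank, we may assume $q-p\in\Gamma$ for either orientation of an edge. Suppose for contradiction that $G$ has $m:=e(G)\ge n^{1+\varepsilon}$ edges, and fix an integer $k=k(\varepsilon)$ slightly larger than $3/\varepsilon$. The aim is to exhibit two points joined by more short paths than Theorem~\ref{thm:ss3} permits.

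First I would pass to a dense subgraph: a graph of average degree $\bar d$ has a subgraph of minimum degree at least $\bar d/2$, so there is $G'\subseteq G$ with $\delta(G')\ge m/n\ge n^{\varepsilon}$. In $G'$ I then build many \emph{totally non-degenerate} paths $v_0,v_1,\dots,v_k$, meaning paths whose direction vector $(z_1,\dots,z_k)$, $z_j=v_j-v_{j-1}\in\Gamma$, has no vanishing subsum at all. This is done greedily: given $v_0,\dots,v_{i-1}$, choose $v_i$ among the $\ge\delta(G')$ neighbours of $v_{i-1}$, discarding the $\le i-1$ previously used vertices and the at most $2^{\,i-1}$ ``bad'' points $v_{i-1}-\sum_{j\in S}z_j$ (over $S\subseteq\{1,\dots,i-1\}$) whose selection would create a vanishing subsum involving $z_i$; for $n\ge n_0(\varepsilon)$ this still leaves at least $\delta(G')/2$ valid choices at each step. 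Hence $G'$ contains at least $|V(G')|\,(\delta(G')/2)^{k}$ such paths, and since there are at most $n^{2}$ ordered pairs of endpoints, some pair $(u,w)$ with $u\ne w$ is joined by at least $(\delta(G')/2)^{k}/n\ge n^{\,k\varepsilon-1}/2^{k}\ge n^{2}/2^{k}$ of them.

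Each such path yields a solution $(z_1,\dots,z_k)\in\Gamma^{k}$ of $z_1+\cdots+z_k=w-u$, and, with $u,w$ fixed, distinct paths yield distinct solutions. Dividing through by $w-u\in\mathbb{C}^{*}$ rewrites this as $az_1+\cdots+az_k=1$ with $a=1/(w-u)$, and a subsum of the left side vanishes precisely when the corresponding subsum of the $z_j$ does, which by construction never happens. Theorem~\ref{thm:ss3}, applied over $K=\mathbb{C}$ with the group $\Gamma$ of rank $r$, therefore bounds the number of these solutions by $A(k,r)\le(8k)^{4k^{4}(k+kr+1)}$. Using $r<c\log n$ this is at most $(8k)^{4k^{4}(k+1)}\cdot n^{\,4k^{5}c\log(8k)}$, which is smaller than $n^{2}/2^{k}$ once $c=c(\varepsilon)$ is chosen small enough (say $4k^{5}c\log(8k)<\tfrac12$) and $n\ge n_{0}(\varepsilon)$. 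This contradicts the previous paragraph, so $e(G)<n^{1+\varepsilon}$.

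The crux, and the reason the Subspace Theorem is needed at all, is the ``no vanishing subsum'' hypothesis: without it the equation $z_1+\cdots+z_k=w-u$ typically has infinitely many solutions in $\Gamma$ (take pairs $z,-z$), so counting raw walks is hopeless. The real work is therefore the greedy construction, i.e.\ guaranteeing that enough paths of length $k$ between one pair of endpoints have completely generic direction vectors, so that Theorem~\ref{thm:ss3} can be applied. Everything else, the density reduction, the pigeonhole on endpoints, and the choice of $k$ and $c$ as functions of $\varepsilon$ (with $k$ of order $1/\varepsilon$ and $c$ a small positive function of $\varepsilon$), is routine, but it is exactly what forces the hypothesis $n>n_{0}$ and pins down the admissible range $r<c\log n$.
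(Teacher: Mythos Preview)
Your argument is correct and follows essentially the same route as the paper: pass to a subgraph of large minimum degree, greedily build many length-$k$ paths whose direction vectors have no vanishing subsum, pigeonhole onto a single pair of endpoints, and contradict the bound $A(k,r)$ from Theorem~\ref{thm:ss3}. The only noteworthy difference is that you fix $k$ as a constant of order $1/\varepsilon$ and then choose $c=c(\varepsilon)$ small enough to force the contradiction, whereas the paper lets $k$ vary with $n$ and $r$ (of order $(\log n/r)^{1/5}$) and then reads off the admissible $c$; both choices lead to the same dependence $c\asymp \varepsilon^{5}/\log(1/\varepsilon)$, and your version is arguably the cleaner bookkeeping.
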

This is our first combinatorial application of the Subspace Theorem.  The proof
is given below.

Suppose $G=G(V,E)$ is a graph on $v(G)=n$ vertices and
$e(G)=cn^{1+\alpha}$ edges.  We denote the minimum degree in $G$
by $\delta(G)$.

Note that by removing vertices with degree less than $(c/2)n^{\alpha}$ we have a
subgraph $H$ with at least $e(H)\ge(c/2)n^{1+\alpha}$ edges and $\delta(H)\ge
(c/2)n^{\alpha}$.  The number of vertices in $H$ is at least
$v(H)=\sqrt{c}n^{1/2+\alpha/2}$.  We will consider such a well behaved subgraph
instead of the original graph.

\begin{proof}[Proof of Theorem~\ref{thm:unit}]
Let $G$ be the unit distance graph on $n$ points with unit distances coming from
$\Gamma$ as edges.  We show that there are less than $n^{1+\varepsilon}$ such
distances, i.e. edges, for any $\varepsilon > 0$.  We can assume that $e(G)\ge
(1/2)n^{1+\varepsilon}, v(G)\ge n^{1/2+\varepsilon/2}$ and $\delta(G)\ge
(1/2)n^{\varepsilon}$.

Consider a path in $G$ on $k$ edges $P_k=p_0p_1\dots p_k$.  We denote by
$u_i(P_k)$ the unit vector between $p_i$ and $p_{i+1}$.  The path is
\emph{nondegenerate} if $\sum_{i\in I}u_i(P_k) = 0$ has no solutions where $I$
is a nonempty subset of $\{0,1,\dots,k-1\}$.  Note that such a sum is a sum of
roots of unity with no vanishing subsums.  We will denote by $\mathcal{P}_k(v,w)$ the set of nondegenerate
paths of length $k$ between vertices $v$ and $w$.

The number of nondegenerate paths of length $k$ from any vertex is at least
\[\prod_{\ell=0}^{k-1}(\delta(G)-2^{\ell}+1) \ge 
\frac{n^{k\varepsilon}}{2^{2k}}.\] The first expression is true since if we
consider a path $P_\ell$ on $\ell<k$ edges then all
but $2^{\ell}-1$ possible continuations give a path $P_{\ell+1}$ with no
vanishing subsums.  The inequality 
is true if we have assume $2^k \le (1/2)n^{\varepsilon}$, which is true 
if $k < \varepsilon\log n/\log 2 - 1$, a fact we will confirm at the end of the
proof.  From this we get that the number of nondegenerate paths $P_k$ in the graph is at least 
$n^{1/2+(k+1/2)\varepsilon}/2^{2k+1}$.
So there exist vertices $v,w$ in $G$ with \[|\mathcal{P}_k(v,w)|\ge 
\frac{n^{(k+1/2)\varepsilon-3/2}}{4^{k}}.\]

Consider a path $P_k\in\mathcal{P}_k(v,w)$, $P_k=p_0p_1\dots p_k$.  Let $a$ be the complex number
giving the vector between $p_0$ and $p_k$.  Since $P_k$ is nondegenerate we get
a solution of $(1/a)x_1+(1/a)x_2+\dots+(1/a)x_k=1$
with no vanishing
subsums.  Thus 
Theorem~\ref{thm:ss3} gives \[|\mathcal{P}_k(v,w)|\le(8k)^{4k^4(k+kr+1)}.\]
This with the lower bound give 
\begin{eqnarray*}
   ((k+1/2)\varepsilon-3/2)\log n &\le& k\log 4 + 4k^4(k+kr+1)\log(8k)\\
                                  &\le& 5rk^5\log k,
\end{eqnarray*}
where the last inequality holds for $k$ large enough.  So
\begin{equation}
   \label{eq:ineq}
   \varepsilon\le 
   \frac{5rk^4\log k}{\log n} + \frac{3}{2k}.
\end{equation}

This inequality holds for $k \ge \exp\left((1/5)W(5c_2\log n/r)\right)$
where $W$ is the positive real-valued function
that solves $x=W(x)e^{W(x)}$.
Note that the function $W$ satisfies $(1/2)\log x\le W(x)\le \log x$ for $x\ge e$.

Since $r+1\le c\log n$ we can choose \[c'\left(\frac{\log n}{r}\right)^{1/5} \le 
k \le c''\left(\frac{\log n}{r}\right)^{1/5}.\]

Then for each $\varepsilon > 0$ there is a constant $c>0$ such that 
\eqref{eq:ineq} above holds for $n$ large enough.  
Earlier we assumed that $k\le \varepsilon\log n/\log 2-1$.  This holds for the
value of $k$ given above for $n$ large enough.  So the number of unit distances
from $\Gamma$ is less than $cn^{1+\varepsilon}$ for each $\varepsilon>0$.
\end{proof}

Performing a careful analysis of Erd\H{o}s' lower bound construction one can
show that all unit distances come from a group with rank at most $c\log
n/\log\log n$ for some $c>0$.  This group is generated by considering solutions
of the equation $x^2+y^2=p$ where $p$ is prime.  Using the prime number theorem
for arithmetic progressions we get the bound on such solutions and thus on the
rank.  For all the details see \cite{Schw13b}.  So Erd\H{o}s' construction
satisfies the conditions of Theorem~\ref{thm:unit}.  A similar approach could be
used for other types of lattices.  So all the best known lower bounds for the
unit distance problem have unit distances coming from a well structured group.
It would be interesting to see if any configuration of points with maximum unit
distances has such a structure.

\subsection{Sum-product estimates}
\label{subsec:sumProduct}

The theory of sum sets and product sets plays an important part in combinatorics and additive
number theory.  The goal of the field is to show that for any finite subset of 
the complex numbers either the sum set or the product set is large.

Formally, given a set $A\subset\mathbb{C}$, the sum set, denoted by $A+A$, and 
product set, denoted by $AA$, are \[A+A:=\{a+b : a,b \in A\},\qquad AA:=\{ab : 
a,b\in A\}.\]

The following long standing conjecture of Erd\H{o}s and Szemer\'edi 
\cite{Erdo83} has led to much work in the field.
\begin{conjecture}
   Let $\varepsilon>0$ and $A\subset \mathbb{Z}$ with $|A|=n$.  Then \[|A+A|+|AA|\ge
   Cn^{2-\varepsilon}.\]
\end{conjecture}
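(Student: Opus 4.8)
The Erd\H{o}s--Szemer\'edi conjecture is famously open, so what follows is not a proof but a description of how the Subspace Theorem bears on it --- an approach that does settle the conjecture when the product set grows slowly, but, as we shall see, not in general. The plan is to control the \emph{additive energy} $E(A) := |\{(a,b,c,d)\in A^4 : a+b=c+d\}|$, because $|A+A|\ge |A|^4/E(A)$ by Cauchy--Schwarz, so any bound of the form $E(A)\le C|A|^2$ yields $|A+A|\ge |A|^2/C$ at once.

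First I would dispose of the trivial regime: if $|AA|\ge n^{2-\varepsilon}$ there is nothing to prove, so assume the product set is small, say $|AA|\le K|A|$ for a parameter $K$ (running the argument for each fixed $K$ is what will give information in the intermediate range). Since $\mathbb{Q}^*$ is a torsion-free abelian group, Freiman's structure theorem in that setting shows that a set of integers with $|AA|\le K|A|$ lies in a generalized geometric progression, hence in a coset $g_0 H$ of a multiplicative subgroup $H\subset\mathbb{Q}^*$ whose rank $r=r(K)$ depends on $K$ alone.

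Next comes the Subspace Theorem. Set aside the $O(n^2)$ quadruples $(a,b,c,d)$ with a zero entry; every remaining solution of $a+b=c+d$ can be divided by $d$ and rewritten as $z_1+z_2+z_3=1$ with $z_1=a/d$, $z_2=b/d$, $z_3=-c/d$, and the $z_i$ all lie in the group $\langle H,-1\rangle$, whose rank is still $r(K)$. By Theorem~\ref{thm:ss3} applied with $n=3$, the number of such solutions with no vanishing subsum is at most the constant $A(3,r(K))$; since for each of the $n$ choices of $d$ the map $(a,b,c)\mapsto(z_1,z_2,z_3)$ is injective, the nondegenerate quadruples number at most $n\cdot A(3,r(K))$. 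A vanishing subsum forces $z_i=1$ or $z_i+z_j=0$, i.e.\ one of the six relations $a=d$, $b=d$, $c=-d$, $a=-b$, $a=c$, $b=c$; each such relation determines two of the four variables, so the degenerate quadruples number $O(n^2)$. Hence $E(A)\le n\cdot A(3,r(K))+O(n^2)$, which for fixed $K$ is $O_K(n^2)$, and Cauchy--Schwarz gives $|A+A|\ge c(K)\,n^2$.

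The main obstacle --- and the reason this stops short of the full conjecture --- is the dependence of $A(3,r(K))$ on $K$. To say anything about a product set of size $n^{1+\delta}$ one must take $K=n^{\delta}$, which is unbounded; then $r(K)$ grows with $n$ and, by the bound in Theorem~\ref{thm:ss3}, $A(3,r(K))$ grows faster than any power of $n$, so the term $n\cdot A(3,r(K))$ swamps $n^2$ and the estimate becomes vacuous. A careful, quantitative tracking of these dependences (the idea behind Chang's theorem) recovers only $|A+A|\ge n^{2-f(\delta)}$ with $f(\delta)\to 0$ as $\delta\to 0$, i.e.\ the conjecture for sets with sub-polynomial product growth. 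Closing the gap would need either a version of the Subspace Theorem whose bound is polynomial in the rank, or a genuinely different mechanism for turning multiplicative structure into additive expansion.
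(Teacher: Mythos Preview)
You correctly note that the conjecture is open and the paper does not prove it; instead the paper proves the partial result Theorem~\ref{thm:sumProduct} (the case $|AA|\le C|A|$), and your sketch of that case---Freiman's lemma to place $A$ in a bounded-rank multiplicative group, dividing $a+b=c+d$ through by one variable and applying Theorem~\ref{thm:ss3} to count nondegenerate solutions, handling vanishing subsums as $O(n^2)$, and Cauchy--Schwarz from additive energy to $|A+A|$---is essentially the paper's own argument. Your final paragraph on why the bound degrades when $K$ grows with $n$ is accurate and a useful addition not spelled out in the paper.
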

This conjecture is still out of reach.  The best known bound, which holds for 
real
numbers and not just integers, is $Cn^{4/3-o(1)}$ due to Solymosi
\cite{Soly09}. A similar bound was proved recently by Konyagin  and Rudnev in \cite{KR}.

Chang showed that when the product set is small the Subspace Theorem can be used 
to show that
the sum set is large \cite{Chan06}. The following reformulation of Chang's observation is due to Andrew Granville.
\begin{theorem}
   \label{thm:sumProduct}
   Let $A\subset \mathbb{C}$ with $|A|=n$.  Suppose $|AA| \le Cn.$  Then
   \[|A+A|\ge \frac{n^2}{2}+O_C(n).\]
\end{theorem}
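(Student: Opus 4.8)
The plan is to show that $A$ admits only $O_C(n)$ ``additive coincidences'', which forces $|A+A|$ to be essentially as large as the trivial upper bound $\binom{n+1}{2}$. First I would reduce to the case $A\subseteq\mathbb C^*$, the case $0\in A$ following by a trivial reduction. Write $m=\binom{n+1}{2}=\frac{n^2+n}{2}$ for the number of $2$-element multisets $\{a,b\}$ from $A$, and for a value $v$ let $\rho(v)$ be the number of such multisets with $a+b=v$. Then $\sum_v\rho(v)=m$ while $|A+A|$ is the number of $v$ with $\rho(v)\ge 1$, so
\[
m-|A+A|=\sum_{v\in A+A}\bigl(\rho(v)-1\bigr).
\]
Since $|A+A|\le m=\frac{n^2}{2}+O(n)$ holds trivially, it is enough to bound the right-hand side by $O_C(n)$.

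Next I would peel off the term $v=0$: the multisets summing to $0$ are exactly the $\{x,-x\}$ with $x,-x\in A$, so $\rho(0)\le\lceil n/2\rceil$ and this term is $O(n)$. For $v\ne 0$ use $\rho(v)-1\le\binom{\rho(v)}{2}$; mapping each pair of distinct multisets with a common sum $v\ne 0$ to an ordered quadruple representing it, we see that $\sum_{v\ne0}\binom{\rho(v)}{2}$ is at most the number of ordered quadruples $(a,b,c,d)\in A^4$ with $a+b=c+d\ne 0$ and $\{a,b\}\cap\{c,d\}=\emptyset$. (The disjointness is automatic: $a=c$ together with $a+b=c+d$ forces $b=d$, so the two multisets would coincide.)

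The crucial ingredient, and the step I expect to be the main obstacle, is to place $A$ inside a multiplicative group of bounded rank. This is exactly where Chang invoked a Freiman-type structure theorem \cite{Chan06}: since $\langle A\rangle$ is a finitely generated subgroup of $\mathbb C^*$, its torsion-free part has finite rank, and $|AA|\le Cn$ forces $A$ into a multiplicative subgroup $\Gamma\le\mathbb C^*$ of rank $r=O_C(1)$ (equivalently, $A$ lies in a coset of a multiplicative generalized arithmetic progression of rank $O_C(1)$ and size $O_C(n)$, whose ambient group has rank $O_C(1)$). Establishing this structure, with the correct dependence of $r$ on $C$, is the substantive part of the argument; everything after it is soft.

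Granting the structure, I would finish with Theorem~\ref{thm:ss3}. For a quadruple $(a,b,c,d)$ as above, dividing $a+b=c+d$ by $d\in A\subseteq\mathbb C^*$ yields
\[
z_1+z_2-z_3=1,\qquad z_1=a/d,\ z_2=b/d,\ z_3=c/d\in\Gamma,
\]
and the constraints $a+b\ne 0$ and $\{a,b\}\cap\{c,d\}=\emptyset$ translate exactly into the non-vanishing of every proper subsum on the left (for instance $z_1+z_2=0$ would mean $a+b=0$, $z_1=z_3$ would mean $a=c$, $z_2=z_3$ would mean $b=c$, and each $z_i\ne 0$). Hence Theorem~\ref{thm:ss3}, applied in dimension $3$ to the rank-$r$ group $\Gamma$, bounds the number of such triples $(z_1,z_2,z_3)$ by the constant $A(3,r)$, which depends only on $r$ and hence only on $C$. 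Since $(a,b,c,d)=(z_1d,z_2d,z_3d,d)$ is recovered from its triple together with the value $d\in A$, there are at most $n\,A(3,r)=O_C(n)$ such quadruples. Adding the $O(n)$ contribution from $v=0$ gives $m-|A+A|=O_C(n)$, that is $|A+A|\ge\frac{n^2}{2}+O_C(n)$, as required.
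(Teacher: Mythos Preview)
Your proof is correct and follows essentially the same route as the paper: the step you flag as the main obstacle is precisely Lemma~\ref{lem:freiman} (Freiman), which places $A$ in a multiplicative group of rank $O_C(1)$, after which dividing $a+b=c+d$ through by $d$ and applying Theorem~\ref{thm:ss3} to $z_1+z_2-z_3=1$ is exactly what the paper does. The only difference is bookkeeping: the paper bounds the full additive energy $\sum_i|P_i|^2\le 2n^2+O_C(n)$ and then applies Cauchy--Schwarz, whereas your inequality $\rho(v)-1\le\binom{\rho(v)}{2}$ together with the disjointness observation strips out the trivial $2n^2$ solutions up front and avoids Cauchy--Schwarz, and your preliminary reduction to $0\notin A$ likewise replaces the paper's separate treatment of the case $x_4=0$.
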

We will present the proof of Theorem \ref{thm:sumProduct} below.  To use the Subspace Theorem we need a 
multiplicative subgroup with finite rank to work with.  The following lemma of Freiman provides 
this \cite{Frei73}.
\begin{lemma}[Freiman]\label{lem:freiman}
   Let $A\subset\mathbb{C}$.  If $|AA|\le C|A|$ then $A$ is a subset of a
   multiplicative subgroup of $\mathbb{C}^*$ of rank at most $r(C)$.
\end{lemma}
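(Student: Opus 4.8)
The plan is to deduce this from Freiman's structure theorem for sets with small doubling, applied in the multiplicative group $\mathbb{C}^*$. We may assume $0\notin A$, since $0$ lies in no multiplicative subgroup and the statement is understood with this tacit restriction. Recall that the \emph{rank} of a subgroup $\Gamma\subseteq\mathbb{C}^*$ is $\dim_{\mathbb{Q}}(\Gamma\otimes_{\mathbb{Z}}\mathbb{Q})$, equivalently the largest size of a multiplicatively independent subset of $\Gamma$; the torsion subgroup of $\mathbb{C}^*$ is the group of all roots of unity, and it contributes nothing to the rank. Moreover, for abelian groups, rank does not increase when passing to a subgroup. So it is enough to exhibit a subgroup of $\mathbb{C}^*$ of rank bounded in terms of $C$ alone that contains $A$; in fact $\langle A\rangle$ itself will do, once we bound its rank.

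The tool I would use is Freiman's theorem in the form valid for an arbitrary abelian group: if $B$ is a finite subset of an abelian group with $|B+B|\le K|B|$, then $B$ is contained in a coset progression, i.e.\ there are a finite subgroup $H$, an integer $d\le d(K)$, elements $g_0,g_1,\dots,g_d$, and positive integers $N_1,\dots,N_d$ with
\[
  B\subseteq g_0+H+\{\,n_1g_1+\dots+n_dg_d : 0\le n_i<N_i\,\}.
\]
As with the Subspace Theorem, I would treat this as a black box; its proof, via Freiman modelling and the geometry of numbers (see \cite{Frei73}), is beyond the scope of the survey. Applying it with the group taken multiplicatively as $\mathbb{C}^*$, and with $B=A$, $K=C$, yields a finite subgroup $H\le\mathbb{C}^*$ — necessarily a cyclic group of roots of unity — together with $g_0,g_1,\dots,g_d\in\mathbb{C}^*$, $d\le d(C)$, and exponent bounds $N_i$, such that
\[
  A\;\subseteq\; g_0\cdot H\cdot\{\,g_1^{n_1}\cdots g_d^{n_d} : 0\le n_i<N_i\,\}.
\]
The right-hand side lies in the subgroup $\langle g_0,g_1,\dots,g_d\rangle\cdot H$ of $\mathbb{C}^*$, which is generated by $d+1$ elements together with a finite group and hence has rank at most $d+1$. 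Therefore $\langle A\rangle$, a subgroup of it, has rank at most $d+1\le d(C)+1$, and setting $r(C)=d(C)+1$ completes the argument.

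The main obstacle is exactly what the black box conceals: one needs the version of Freiman's theorem that tolerates torsion, since $\mathbb{C}^*$ carries the full group of roots of unity as its torsion part, and one must feed in the doubling constant of $A$ as a subset of $\mathbb{C}^*$ itself — in a torsion-free quotient, many elements of $A$ could collapse together and destroy the hypothesis. I would also point out that no soft counting argument suffices here: combining the Pl\"unnecke--Ruzsa bound $|A^{(k)}|\le C^k|A|$ for iterated product sets $A^{(k)}=\{a_1\cdots a_k : a_i\in A\}$ with a count of the monomials over a maximal multiplicatively independent subset of $A$ only produces an inequality that holds for every possible rank, so the structural input of Freiman's theorem is genuinely needed.
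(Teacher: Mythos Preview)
The paper does not supply a proof of Lemma~\ref{lem:freiman}; it is quoted as a result of Freiman with a reference to \cite{Frei73} and then used as a black box in the proof of Theorem~\ref{thm:sumProduct}. So there is nothing in the text to compare your argument against.

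That said, your derivation is sound. Applying the general-abelian-group form of Freiman's theorem to $A$ inside the multiplicative group $\mathbb{C}^*$ places $A$ in a coset progression $g_0\cdot H\cdot P$ with $H$ finite and $P$ of bounded dimension $d\le d(C)$; the subgroup generated by $H$ together with $g_0,g_1,\dots,g_d$ then has rank at most $d(C)+1$, since $H$, being a finite subgroup of $\mathbb{C}^*$, consists of roots of unity and contributes nothing to the rank. Your diagnosis of the torsion obstacle is also correct: one cannot simply pass to the torsion-free quotient of $\mathbb{C}^*$ and invoke the classical Freiman dimension lemma, because the doubling constant can blow up under that quotient, and the naive Pl\"unnecke--Ruzsa count you sketch indeed yields no bound on the rank. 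One small point of attribution: the black box you actually need --- Freiman's theorem in an arbitrary abelian group, with a finite subgroup $H$ appearing in the coset progression --- is the Green--Ruzsa theorem rather than anything in \cite{Frei73}, which treats only the torsion-free case. This does not affect the correctness of your argument, only the citation.
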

\begin{proof}[Proof of Theorem~\ref{thm:sumProduct}]
   We consider solutions of $x_1+x_2=x_3+x_4$ with $x_i\in A$.  A solution of 
   this equation corresponds to two pairs of elements from $A$ that give the 
   same element in $A+A$. Let us suppose that $x_1+x_2\neq 0$ (there are at
   most $|A|=n$ solutions of the equation $x_1+x_2= 0$ with $x_1,x_2\in A.$) 

   First we consider the solutions with $x_4=0$.  Then by rearranging we get 
   \begin{equation} \label{eq:three}
      \frac{x_1}{x_3}+\frac{x_2}{x_3}=1.
   \end{equation}
   By Lemma~\ref{lem:freiman} and Theorem~\ref{thm:ss3} there are at most 
   $s_1(C)$ solutions of $y_1+y_2=1$ with no subsum vanishing.  Each of these 
   gives at most $n$ solutions of \eqref{eq:three} since there are $n$ choices 
   for $x_3$.  There are only two solutions of $y_1+y_2=1$ with a vanishing 
   subsum, namely $y_1=0$ or $y_2=0$, and each of these gives $n$ solutions of 
   \eqref{eq:three}.  So we have a total of $(s_1(C)+2)n$ solutions of 
   \eqref{eq:three}.

   For $x_4\ne 0$ we get
   \begin{equation}\label{eq:four}
      \frac{x_1}{x_4}+\frac{x_2}{x_4}-\frac{x_3}{x_4}=1.
   \end{equation}
   Again by Freiman's Lemma and the Subspace Theorem, the number of solutions of 
   this with no vanishing subsum is at most $s_2(C)n$.  If we have a vanishing 
   subsum then $x_1=-x_2$ which is a case we excluded earlier or $x_1=x_3$ and
   then $x_2=x_4,$ or $x_2=x_3$ and then $x_1=x_4.$  So we get at most $2n^2$ 
   solutions of \eqref{eq:four} with a vanishing subsum (these are the $x_1+x_2=x_2+x_1$ identities.)

   So, in total, we have at most $2n^2+s(C)n$ solutions of $x_1+x_2=x_3+x_4$ 
   with $x_i\in A$.  Suppose $|A+A|=k$ and $A+A=\{\alpha_1, \dots, \alpha_k\}$.
   We may assume that $\alpha_1=0$.  Recall that we ignore sums $a_1+a_2=0$.
   Let \[P_i=\{(a,b)\in A\times A : a+b=\alpha_i\}.\]  Then \[\sum_{i=2}^k|P_i| 
   \ge n^2-n=n(n-1).\]  Also, a solution of $x_1+x_2=x_3+x_4$ corresponds to picking two 
   values from $P_i$ where $x_1+x_2=\alpha_i$.  Thus
      \begin{align*}
         2n^2 + s(C)n \ge \sum_{i=2}^k|P_i|^2\ge 
         \frac{1}{k-1}\left(\sum_{i=2}^k|P_i|\right)^2 \ge \frac{n^2(n-1)^2}{k-1}
      \end{align*}
      by the Cauchy-Schwarz inequality.  The bound for $k$ follows.
\end{proof}

A number of other combinatorial results follow from the Subspace Theorem.  We
give one more of these, from combinatorial geometry.  This is similar to a result
due to Chang and Solymosi \cite{Chan07}.     
Given two lines $L$ and $M$ we denote their point of intersection by $L\cap M$.
\begin{theorem} \label{thm:lines}
   Let $C>0$.  Then there exists $c>0$ such that for any $n+3$ lines $L_1, L_2, 
   L_3, M_1, \dots, M_n$ in $\mathbb{C}^2$, with $L_1\cap L_2, L_1\cap L_3$ and  
   $L_2\cap L_3$ distinct, if the number of distinct intersection points 
   $L_i\cap M_j, 1\le i\le 3, 1\le j\le n$, is at most $C\sqrt{n}$ then any line 
   $L \notin \{L_1, L_2, L_3\}$ has at least $cn$ distinct intersection points 
   $L\cap M_j, 1\le j\le n$.
\end{theorem}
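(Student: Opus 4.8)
The plan is to pass to the dual projective plane and recast the theorem as a statement about projections of a point set. Associate to each line $M_j$ the point $M_j^*\in\mathbb{P}^2$; then ``$M_j$ meets $L_i$ in one of $k$ points'' becomes ``$M_j^*$ lies on one of $k$ lines through the point $L_i^*$'', and since $L_1\cap L_2$, $L_1\cap L_3$, $L_2\cap L_3$ are distinct the three points $L_1^*,L_2^*,L_3^*$ are not collinear. Choosing coordinates so that $L_1^*,L_2^*,L_3^*$ is the coordinate triangle, the hypothesis says that in affine coordinates the $n$ points $M_j^*=(v_j,u_j)$ satisfy $u_j\in U$, $v_j\in V$ and $v_j/u_j\in W$ for sets with $|U|+|V|+|W|\le C\sqrt n$; since $n\le |U|\,|V|\le C^2 n$ this forces $|U|,|V|,|W|\asymp_C\sqrt n$ and $n\asymp_C|U|\,|W|$. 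The conclusion, for a line $L\notin\{L_1,L_2,L_3\}$, is exactly that the central projection of the points $M_j^*$ from $Q:=L^*$ takes at least $cn$ distinct values. (If $Q$ happens to lie on a side of the triangle $L_1^*L_2^*L_3^*$ --- which can occur for at most one side, as $L$ equals none of $L_1,L_2,L_3$ --- the analysis below only simplifies, with fewer terms, so I describe the generic case.)

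I would argue by contradiction: if the $Q$-projection took fewer than $cn$ values, then by Cauchy--Schwarz the number $E$ of ordered pairs of the $M_j^*$ that are collinear with $Q$ (counting the diagonal) would exceed $n/c$, so it suffices to show $E=O_C(n)$. Writing out the collinearity of $M_i^*$, $M_j^*$ and $Q$ and substituting $v_j=u_jw_j$ yields a single equation in $u_i,u_j\in U$ and $w_i,w_j\in W$; dividing through by a monomial puts it in the form $c_1z_1+c_2z_2+\dots+c_5z_5=1$, where the constants $c_\ell$ depend only on $L_1,L_2,L_3,L$ and the $z_\ell$ are Laurent monomials in $u_i,u_j,w_i,w_j$, and where the tuple $(z_1,\dots,z_5)$ determines $(u_i,u_j,w_i,w_j)$, hence the pair $(i,j)$.

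The step I expect to be the real obstacle is producing a multiplicative subgroup of $\mathbb{C}^*$ of bounded rank containing $U$ and $W$, so that Theorem~\ref{thm:ss3} can be applied. This is where the strength of the hypothesis is used: at least $n\ge|U|\,|W|/C^2$ of the pairs $(u,w)\in U\times W$ have $uw\in V$ with $|V|=O_C(\min(|U|,|W|))$, so the multiplicative energy of $U$ and $W$ is within a constant factor of the maximum possible. A Balog--Szemer\'edi--Gowers-type argument then produces $U'\subseteq U$ and $W'\subseteq W$ of proportional size with $|U'W'|=O_C(|U'|)$, hence $|U'U'|=O_C(|U'|)$ by the multiplicative Pl\"unnecke--Ruzsa inequalities, and Freiman's Lemma~\ref{lem:freiman} places $U'$, and similarly $W'$, inside a multiplicative group $\Gamma$ of rank $r(C)$; a standard popularity refinement lets one assume that a positive proportion of the $M_j^*$ already lie in $U'\times W'$, so it is enough to bound $E$ for this sub-configuration.

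Finally, with $U',W'\subseteq\Gamma$ every $z_\ell$ lies in $\Gamma$, so Theorem~\ref{thm:ss3} bounds the number of solutions of $c_1z_1+\dots+c_5z_5=1$ with no vanishing subsum by $A(5,r(C))=O_C(1)$, and since each solution determines at most one pair $(i,j)$ these contribute $O_C(1)$ to $E$. The solutions with a vanishing subsum satisfy extra monomial relations among $u_i,w_i,u_j,w_j$; since a line $M_j$ is determined by its intersection points with any two of $L_1,L_2,L_3$, and each such point ranges over a set of size $O_C(\sqrt n)$, these relations confine the remaining solutions to $O_C(n)$ in number --- this parallels the treatment of vanishing subsums in the proof of Theorem~\ref{thm:sumProduct}. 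Hence $E=O_C(n)$, contradicting $E>n/c$ once $c$ is chosen small enough in terms of $C$, which completes the proof.
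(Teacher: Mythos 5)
Your argument reaches the same destination as the paper's sketch by a route that is the same in its core and different in its endgame, and I judge it workable, with two places that need shoring up. The shared core is the middle step: after normalizing $L_1,L_2,L_3$ (you do this dually, the paper affinely; your coordinates $u_j,v_j,w_j$ are exactly the paper's intersection data $x_i$, $y_i$ and the ratio coming from $L_2\cap M_i$, so this is cosmetic), both arguments use near-maximal multiplicative energy, Balog--Szemer\'edi, Pl\"unnecke--Ruzsa and Freiman's Lemma~\ref{lem:freiman} to place a positive proportion of the configuration inside a multiplicative group of rank $r(C)$, and then invoke Theorem~\ref{thm:ss3}. Where you differ is the finish: the paper fixes a candidate intersection point $(a,b)$ off $L_1\cup L_2\cup L_3$ and gets a two-term unit equation $cx+dy=1$ whose nondegenerate solutions bound, uniformly by some $B(C)$, the number of selected lines through $(a,b)$; the at-least-$cn$ count follows by dividing the $c''n$ selected lines by $B$. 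You instead bound the collinear-with-$Q$ energy $E$ via a five-term unit equation extracted from the $3\times 3$ collinearity determinant and finish with Cauchy--Schwarz. That is viable, but it buys you a harder degenerate-case analysis: every vanishing subsum of the six-term determinant identity must be enumerated and shown to contribute only $O_C(n)$ pairs (the dangerous families are pairs lying on a common line through a vertex of the coordinate triangle, or through $Q$ itself; they are rescued because such a line carries only $O_C(\sqrt n)$ of the $M_j^*$, each $M_j$ being pinned down by one remaining intersection coordinate ranging over a set of size $O_C(\sqrt n)$). You assert this in one sentence; it is true, but it is a real case analysis and is the thinnest step of your write-up, one the paper's two-term equation avoids almost entirely.

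The second soft spot is the sentence claiming that ``a standard popularity refinement lets one assume that a positive proportion of the $M_j^*$ already lie in $U'\times W'$.'' As literally stated this does not follow: plain Balog--Szemer\'edi(--Gowers) controls $|U'|$, $|W'|$ and $|U'W'|$ but gives no guarantee that $U'\times W'$ contains any of the original points, and refining afterwards cannot recover that. You need either the edge-retaining form of BSG (which keeps a positive proportion of the incidence graph's edges inside $U'\times W'$), or the paper's device: apply Szemer\'edi's Regularity Lemma to the bipartite incidence graph first, run Balog--Szemer\'edi and Freiman inside a regular pair, and use regularity to conclude that $X'\times Y'$ still spans some $c''n$ of the lines. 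Either fix is standard, so this is a repairable misstatement rather than a fatal flaw, but as written the step is not justified. Minor loose ends: discard at the outset the $O_C(\sqrt n)$ lines $M_j$ through a vertex of the triangle (so that all $u_j,v_j,w_j$ are nonzero and the unit-equation coefficients are in $K^*$), and note that distinctness of the $M_j$ is what makes the solution tuple determine the pair $(i,j)$.
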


There are many structure results similar to Theorem~\ref{thm:lines} in
discrete geometry.  These include Beck's Theorem \cite{Beck83}, a structure
theorem for lines containing many points of a cartesian product by Elekes
\cite{Elek97} and generalisations of this line theorem to surfaces by Elekes and
R\'onyai \cite{Elek00}, Elekes and Szab\'o \cite{Elek12} and Frank de Zeeuw and
the authors \cite{Schw13a}.  The proofs of these results used the 
Szemer\'edi-Trotter Theorem and techniques from commutative algebra and 
algebraic geometry.  These theorems have been used to prove various results
including a conjecture of Purdy about the number of distinct distances between
two sets of collinear points in the plane.  For more details see \cite{Elek02},
\cite{Elek99} and \cite{Schw13a}.

We do not prove Theorem~\ref{thm:lines} completely but only give a sketch of how
it follows from the Subspace Theorem.  We don't try to find an efficient
quantitative version here and we don't explain the refereed theorems in detail.
The techniques applied are standard methods in additive combinatorics. All the
details can be found in the book of Tao and Vu, "Additive Combinatorics"
\cite{Tao06}.

\medskip \noindent Apply an affine transformation which moves $L_1$ to the
$x$-axis, $L_2$ to the $y$-axis, and $L_3$ to the horizontal line $y=1$.  The
three lines have distinct intersection points thus such a transformation exists.
Let us denote the $x$-coordinates of $L_1\cap M_i$ and $L_3\cap M_j$ by $x_i$
and $y_j$ respectively. The two sets of $x$-coordinates are denoted by $X$ and
$Y.$ Define a bipartite graph with vertices given by the intersection points of
lines $M_i$ with $L_1$ and $L_3$ (with vertex sets $X$ and $Y$ without
multiplicity.)  Two points are connected by an edge in the graph if they are
connected by a line $M_j.$ This is a bipartite graph on at most $C\sqrt{n}$
vertices with $n$ edges.  Using Szemer\'edi's Regularity Lemma one can find a
regular (random-like) bipartite graph, $G,$ with at least $c'n$ edges and vertex
sets $V_1\subset X$ and $V_2\subset Y.$ If $M_i \cap L_2$ is the point
$(0,\alpha)$ then $x_i/y_i=\alpha/(1-\alpha),$ or equivalently $x_i=\alpha
y_i/(1-\alpha).$ The Balog-Szemer\'edi Theorem and Freiman's Lemma imply that
there are large subsets $X'\subset V_1$ and $Y'\subset V_2$ so that $X'$ and
$Y'$ are subsets of a multiplicative subgroup of $\mathbb{C}^*$ of rank at most
$r(C)$. As $G$ is regular, the subgraph spanned by $X',Y'$ still has at least
some $c''n$ edges.  We show that the lines represented by these $c''n$ edges
cannot have high multiplicity intersections outside of $L_1,L_2, L_3.$ If
$(a,b)$ is a point of $M_i$ connecting two points of $X'$ and $Y'$ then
$(a-x_i)/(a-y_i)=b/(1-b),$ which gives the solution $(x_i,y_i)$ to the equation
$cx+dy=1$ if $a\neq 0, b\neq 0,1.$ Here $c, d$ depend on $a$ and $b$ only. As
$x_i$ and $y_i$ are from a multiplicative group of bounded rank, we have a
uniform bound, $B,$ on the number of lines between $X'$ and $Y'$ which are
incident to $(a,b).$  There are $c''n$ lines connecting at most $C\sqrt{n}$
points. No more than $C\sqrt{n}/2$ of them might be parallel to any given line.
Any line intersects at least $c''n-C\sqrt{n}$ of them.  Any intersection point
outside of lines $L_1, L_2,$ and $L_3$ is incident to at most $B$ lines, so
there are at least $cn$ distinct intersection points  $L\cap M_j, 1\le j\le n$
with any other line.

\medskip
\noindent 
We are unaware of any proof of this fact without the Subspace Theorem.

\bibliographystyle{plain}
\bibliography{refs}

\end{document}